\newtheorem{theorem}{Theorem}[section]
          \newtheorem{corollary}[theorem]{Corollary}
          \newtheorem{prop}[theorem]{Proposition}
          \newtheorem{lemma}[theorem]{Lemma}
\newcommand{\mtwo}[4]{\left[\begin{array}{cc}#1&#2\\#3&#4\end{array}\right]}
\newcommand{\rQ}[2]{#1\big\slash#2}
\newcommand{\lQ}[2]{#1\big\backslash#2}
\newcommand{\dQ}[3]{#1\big\backslash#2\big\slash#3}
\newcommand{\biinv}[2]{#1/\!\!/#2}
\newcommand{\op}[1]{\operatorname{#1}}
\newcommand{\ce}[1]{\widetilde{#1}}
\newcommand{\mf}[1]{\mathfrak{#1}}
\newcommand{\tp}[1]{{^\intercal\:\!\!#1}}
\newcommand{\tpinv}[1]{{^\intercal\:\!\!#1^{-1}}}
\newcommand{\GL}{\operatorname{GL}}
\newcommand{\SL}{\operatorname{SL}}
\newcommand{\Sp}{\operatorname{Sp}}
\newcommand{\SO}{\operatorname{SO}}
\newcommand{\ind}{\operatorname{ind}}
\newcommand{\vol}{\operatorname{vol}}
\newcommand{\tr}{\operatorname{tr}}
\newcommand{\Q}{\mathbb{Q}}
\newcommand{\Z}{\mathbb{Z}}
\newcommand{\C}{\mathbb{C}}
\newcommand{\R}{\mathbb{R}}
\newcommand{\F}{\mathbb{F}}
\newcommand{\h}{\mathfrak{h}}
\newcommand{\g}{\mathfrak{g}}
\newcommand{\X}{\mathcal{X}}
\newcommand{\Y}{\mathcal{Y}}
\newcommand{\W}{\mathcal{W}}
\newcommand{\HH}{\mathcal{H}}
\newcommand{\U}{\mathcal{U}}
\newcommand{\inj}{\hookrightarrow}
\newcommand{\proj}{\twoheadrightarrow}
\newcommand{\cf}{cf. }
\title{A minimal even type of the 2-adic Weil representation}
\author{Aaron Wood}
\thanks{Partially supported by NSF grant DMS-0852429.}
\begin{document}
\maketitle

\begin{abstract}
The Weil representation is used to construct a minimal type of a central extension of $\Sp_{2n}(\Q_2)$.  The corresponding Hecke algebra is shown to be isomorphic to the classical affine Hecke algebra of the split adjoint group $\SO_{2n+1}(\Q_2)$.
\end{abstract}

\section*{Introduction}

Let $\W$ be a nondegenerate symplectic space over a $p$-adic field, $G=\ce{\Sp}(\W)$ a two-fold central extension of the symplectic group $\underline{G}=\Sp(\W)$ of type C$_n$, and  $\upomega$ the Weil representation of $G$.  

If $p$ is odd, $\upomega$ has a one-dimensional subspace upon which the inverse image of the Iwahori subgroup acts.  This minimal type was used by Gan and Savin in \cite{gan-savin} to establish an equivalence of categories between certain representations of $G$ and certain other representations of split adjoint groups of type B$_n$.  In this equivalence, the even Weil representation of $G$ corresponds to the trivial representation of the orthogonal group.  The correspondence is realized explicitly as an isomorphism between the Hecke algebra of this type and the classical affine Hecke algebra of type B$_n$.  If $p=2$, there are no Iwahori-fixed vectors. 

In \cite{loke-savin}, Loke and Savin consider the representations of the two-fold central extension $\ce\SL_2(\Q_2)$ which are generated by vectors on which $K$ acts by a character $\upchi$, where $K$ is the full inverse image of a certain congruence subgroup of $\SL_2(\Z_2)$.  The Hecke algebra which captures the structure of such representations is computed explicitly and is shown to be isomorphic to the classical affine Hecke algebra of $\op{PGL}_2(\Q_2)$.

The purpose of this paper is to frame the preceding result for $\ce\SL_2(\Q_2)$ in the language of the Weil representation and to extend it to larger symplectic groups.  This is accomplished by finding a minimal type of the Weil representation and computing the corresponding Hecke algebra.  In the end, the result is the same as in the case of $p$ odd; that is, the Hecke algebra is shown to be isomorphic to that of the split adjoint orthogonal group.

The description of the appropriate open compact subgroup stems from the following fact: in characteristic 2, the bilinear form attached to a symmetric quadratic form is also alternating.  Hence, the finite split orthogonal group $\op{O}_{2n}(\F_2)$ may be realized as a subgroup of the finite symplectic group $\Sp_{2n}(\F_2)$.  If $B'$ is a Borel subgroup of $\op{O}_{2n}(\F_2)$ which sits in a Borel subgroup $B$ of $\Sp_{2n}(\F_2)$, then the inverse image $\underline{J}$ of $B'$  (under the projection map $\Z_2\to\F_2$) is a subgroup of the Iwahori subgroup, as pictured in the diagram below.
\begin{center}
\begin{tikzpicture}[scale=.4]
	\node at (0,4) {$\underline{J}$};
	\node at (4,4) {$\underline{I}$};
	\node at (9.5,4) {$\Sp_{2n}(\Z_2)$};
	\node at (0,0) {$B'$};
	\node at (4,0) {$B$};
	\node at (9.5,0) {$\Sp_{2n}(\F_2)$};
	\draw[->] (1,4) --(3,4);
	\draw[->] (5,4) --(7,4);
	\draw[->] (1,0) --(3,0);
	\draw[->] (5,0) --(7,0);
	\draw[->] (0,3) --(0,1);
	\draw[->] (4,3) --(4,1);
	\draw[->] (9.5,3) --(9.5,1);
\end{tikzpicture}
\end{center}

Let $J$ be the full inverse image in $G$ of $\underline{J}$.  In the $\ce\SL_2$ case, this $J$ is conjugate to the $K$ from \cite{loke-savin}.

Section 1 of this paper is dedicated to fixing notation and summarizing previously known, but relevant, results.  

In Section 2, the $\ce\SL_2$ case is worked out in detail.  Specifically, the line on which $J$ acts is spanned by the characteristic function of $\Z_2$; the support of the associated Hecke algebra $\HH$ is reduced to the set of $J$-double cosets parametrized by the affine Weyl group; and generators $T_0$ and $T_1$ for $\HH$ are given which satisfy only the quadratic relations $(T_0-2)(T_0+1)=0$ and $T_1^2=1$, giving the identification with the affine Hecke algebra of $\op{PGL}_2(\Q_2)$.

In Section 3, the general symplectic case is described.   The line on which $J$ acts is spanned by the characteristic function of the standard lattice; the support of $\HH$ is again reduced to the set of $J$-double cosets parametrized by the affine Weyl group; and generators $T_0,\dots,T_n$ for $\HH$ are given which satisfy the quadratic relations
$$(T_i-2)(T_i+1)=0\;\;\text{ and }\;\;T_n^2=1$$
and the braid relations of type $C_n$.
\begin{center}
\begin{tikzpicture}[scale=.5]
	\draw (0,-.1) --(2,-.1);
	\draw (0,.1) --(2,.1);
	\draw (2,0) --(5,0);
	\draw (7,0) --(8,0);
	\draw (8,-.1) --(10,-.1);
	\draw (8,.1) --(10,.1);
	\draw[fill] (0,0) circle(5pt);
	\draw[fill] (2,0) circle(5pt);
	\draw[fill] (4,0) circle(5pt);
	\draw[fill] (8,0) circle(5pt);
	\draw[fill] (10,0) circle(5pt);
	\node at (0,-.8) {$T_0$};
	\node at (2,-.8) {$T_1$};
	\node at (4,-.8) {$T_2$};
	\node at (10,-.8) {$T_n$};
	\node at (6,0) {$\cdots$};
\end{tikzpicture}
\end{center}

The affine Hecke algebra $\HH'$ of the split adjoint orthogonal group has generators $t_0,\dots,t_n$ and $\uptau$, where $\uptau$ corresponds to the involution of the extended Dynkin diagram of type $B_n$, with $\uptau^2=1$ and $\uptau t_1\uptau=t_0$.
\begin{center}
\begin{tikzpicture}[scale=.5]
	\draw (0,-1.3) --(2,0);
	\draw (0,1.3) --(2,0);
	\draw (2,0) --(3,0);
	\draw (5,0) --(6,0);
	\draw (6,-.1) --(8,-.1);
	\draw (6,.1) --(8,.1);
	\draw[<->] (0,.9) --(0,-.9);
	\draw[fill] (0,1.3) circle(5pt);
	\draw[fill] (0,-1.3) circle(5pt);
	\draw[fill] (2,0) circle(5pt);
	\draw[fill] (6,0) circle(5pt);
	\draw[fill] (8,0) circle(5pt);
	\node at (-.7,1.3) {$t_0$};
	\node at (-.7,-1.3) {$t_1$};
	\node at (2,-.8) {$t_2$};
	\node at (8,-.8) {$t_n$};
	\node at (4,0) {$\cdots$};
	\node at (.4,0) {$\uptau$};
\end{tikzpicture}
\end{center}

The quadratic relations for the generators $t_i$ are $(t_i-2)(t_i+1)=0$.  Since $t_0$ may be expressed in terms of $t_1$ and $\uptau$, it is not needed to define $\HH'$ as an abstract algebra.   The braid relation between $\uptau$ and $t_1$ is $\uptau t_1\uptau t_1=t_1\uptau t_1\uptau$, so the isomorphism from $\HH'$ to $\HH$ is given by
$$t_n\mapsto T_0,\;\;\dots\;\;
t_1\mapsto T_{n-1}\;\;\text{ and }\;\;
\uptau\mapsto T_n.$$

It is shown that this map preserves the trace and $\ast$ operations for Hilbert algebras, which ensures, by \cite{bushnell-henniart-kutzko}, that the induced Plancherel measures coincide in the correspondence of representations given by the Hecke algebra isomorphism.

A portion of this paper is a part of the author's Ph.D. thesis.  The author would like to thank his advisor Gordan Savin for his invaluable guidance throughout this endeavor.

\section{Preliminaries}

In this section, notation will be fixed and some results will be summarized.

\subsection{Symplectic Vector Spaces}

Let $\W$ be a $2n$-dimensional vector space over a field $F$, and let $Q$ be a nondegenerate, skew-symmetric form on $\W$.  Such a form $Q$ is called a \emph{symplectic form} and such a vector space $\W$ is called a \emph{symplectic space}.  

A subspace $\X$ of $\W$ is \emph{isotropic} if $Q$ is identically zero on $\X$.  For a maximal isotropic subspace $\X$ of $\W$, there is a complementary subspace $\Y$ which is also a maximal isotropic subspace of $\W$; each of $\X,\Y$ is a vector subspace of dimension $n$.  Such a decomposition $\W=\X+\Y$ into maximal isotropic subspaces is called a \emph{complete polarization} of $\W$.

Let $\X+\Y$ be a complete polarization of $\W$ and let $\{e_i\}$ be a basis of $\X$.  There is a basis $\{f_i\}$ of $\Y$ such that $Q(e_i,f_j)=\updelta_{ij}$.  The resulting basis $\{e_1,\dots,e_n,f_1,\dots,f_n\}$ of $\W$ is called a \emph{symplectic basis}.  Under such a basis, elements of $\W$ may be considered as column vectors and the symplectic form $Q$ may be expressed as
$$Q(u,v)=\tp{u} \mtwo{0}{1}{-1}{0} v,$$
where $\tp{u}$ is the transpose of $u$.

\subsection{Symplectic Lie Algebras}\label{liealgebra}

Let $\W$ be a $2n$-dimensional symplectic space over $\C$ with symplectic form $Q$.  The \emph{symplectic Lie algebra} $\mf{sp}(\W)$ is defined to be the Lie algebra of linear endomorphisms $T$ of $\W$ satisfying
$$Q(Tu,v)+Q(u,Tv)=0$$
for all $u,v$ in $\W$, with Lie bracket 
$$[T_1,T_2]=T_1T_2-T_2T_1.$$

Under the symplectic basis, $\mf{sp}(\W)$ becomes the Lie subalgebra of $\mf{gl}_{2n}(\C)$ given by
$$\mf{sp}(\W)=\left\{\mtwo{a}{b}{c}{- \tp{a}}\in\mf{gl}_{2n}(\C): b=\tp{b},\;c=\tp{c}\right\}.$$

\subsubsection{Cartan Decomposition}

Let $\h$ be the Cartan subalgebra consisting of diagonal matrices in $\mf{sp}(\W)$ and let $\h^\ast=\op{Hom}_\C(\h,\C)$ be its linear dual.  The Cartan subalgebra is $n$-dimensional and an arbitrary element $H$ of $\h$ is of the form
$$H=\mtwo{a}{}{}{-a},$$
where $a$ is a diagonal matrix with entries $a_1,\dots,a_n$.  The dual basis of $\h^\ast$ is $\uplambda_1,\dots,\uplambda_n$, defined by $\uplambda_i(H)=a_i$.

The set of \emph{roots} in $\h^\ast$ is
$$\Phi=\{\pm\uplambda_i\pm \uplambda_j: i\neq j\big\}\cup\big\{\pm2\uplambda_i\}.$$
The roots in the first set are called \emph{short roots} and those in the second set are called \emph{long roots}.  

For a root $\upalpha$, the root space $\g_\upalpha$ is the set $\{X\in\mf{sp}(\W):[H,X]=\upalpha(H)X\}$.  To be explicit, $\g_\upalpha=\C X_\upalpha$, where $X_\upalpha$ is defined as follows.
\begin{align*}
\text{ If }\upalpha=\uplambda_i-\uplambda_j,&\text{ then }X_\upalpha=\mtwo{E_{ij}}{0}{0}{-E_{ji}};\\
\text{ if }\upalpha=\uplambda_i+\uplambda_j,&\text{ then }X_\upalpha=\mtwo{0}{E_{ij}+E_{ji}}{0}{0};\\
\text{ if }\upalpha=2\lambda_i,&\text{ then }X_\upalpha=\mtwo{0}{E_{ii}}{0}{0};\\
\text{ if }\upbeta=-\upalpha,&\text{ then }X_\upbeta=\tp{X}_\upalpha.
\end{align*}
The Cartan decomposition of $\mf{sp}(\W)$ is
$$\mf{sp}(\W)=\h+\sum_{\upalpha\in\Phi}\g_\upalpha.$$

\subsubsection{Roots}\label{roots}

Consider the real vector space $\h^\ast_\R$ with basis $\uplambda_1,\dots,\uplambda_n$, which is a Euclidean space under the usual dot product, denoted $(\;,\;)$.  This is the inner product which arises from the Killing form on $\h$, \cf \cite{steinberg}.  For each $\uplambda\in \h^\ast_\R$, define $s_\uplambda:\h^\ast_\R\to\h^\ast_\R$ to be the reflection across the hyperplane 
$$P_\uplambda=\{\upmu\in\h^\ast_\R:(\upmu,\uplambda)=0\}.$$

Define $\upalpha_n=2\uplambda_n$ and, for $i=1,\dots,n-1$, define $\upalpha_i=\uplambda_i-\uplambda_{i+1}$.  Then, 
$$\Pi=\{\upalpha_1,\dots,\upalpha_n\}$$
is a set of \emph{simple roots} in $\Phi$ with corresponding \emph{positive roots}
$$\Phi^+=\{\uplambda_i\pm\uplambda_j:i<j\}\cup\{2\uplambda_i\}.$$

The \emph{Weyl group} $W$ of $\mf{sp}(\W)$ is the finite group of reflections generated by $\{s_\upalpha:\upalpha\in\Phi\}.$  It is a Coxeter group and is generated by the set of simple reflections $\{s_1,\dots,s_n\}$, where $s_i=s_{\upalpha_i}$.    The braid relations for the generators of $W$ are given by the following Coxeter diagram, \cf \cite{humphreys-coxeter}, \cite{iwahori-matsumoto}.
\begin{center}
\begin{tikzpicture}[scale=.5]
	\draw (2,0) --(5,0);
	\draw (7,0) --(8,0);
	\draw (8,-.1) --(10,-.1);
	\draw (8,.1) --(10,.1);
	\draw[fill] (2,0) circle(5pt);
	\draw[fill] (4,0) circle(5pt);
	\draw[fill] (8,0) circle(5pt);
	\draw[fill] (10,0) circle(5pt);
	\node at (2,-.8) {$s_1$};
	\node at (4,-.8) {$s_2$};
	\node at (8,-.8) {$s_{n-1}$};
	\node at (10,-.8) {$s_n$};
	\node at (6,0) {$\cdots$};
\end{tikzpicture}
\end{center}

\subsubsection{Affine Roots}\label{affineroots}

The set of \emph{affine roots} is
$$\Phi^\text{aff}=\{\upalpha+m:\upalpha\in\Phi,m\in\Z\},$$
where an affine root $\upalpha+m$ acts on $\h$ by 
$$(\upalpha+m)(H)=\upalpha(H)+m.$$  
Let $\upalpha_\ast$ be the highest root in $\Phi$,
$$\upalpha_\ast=2\uplambda_1=2\upalpha_1+\dots+2\upalpha_{n-1}+\upalpha_n,$$ 
and define the affine root $\upalpha_0$ by $\upalpha_0=1-\upalpha_\ast.$  Then, 
$$\Pi^\text{aff}=\{\upalpha_0,\upalpha_1,\dots,\upalpha_n\}$$
is a set of \emph{simple affine roots}.

For each affine root $\upgamma=\upalpha+m$, define $s_\upgamma:\h^\ast_\R\to\h^\ast_\R$ to be the reflection across the affine hyperplane 
$$P_\upgamma=P_{\upalpha+m}=\{\upmu\in\h^\ast_\R:(\upmu,\upalpha)=m\}.$$
Let $s_0$ be the affine reflection $s_0=s_{\upalpha_0}.$  

The group  $W^\text{aff}$, generated by these affine reflections, is called the \emph{affine Weyl group}; it is a Coxeter group generated by the simple affine reflections $\{s_0,s_1,\dots,s_n\}$.  The braid relations for these generators are given by the following Coxeter diagram, \cf \cite{humphreys-coxeter}, \cite{iwahori-matsumoto}.
\begin{center}
\begin{tikzpicture}[scale=.5]
	\draw (0,-.1) --(2,-.1);
	\draw (0,.1) --(2,.1);
	\draw (2,0) --(5,0);
	\draw (7,0) --(8,0);
	\draw (8,-.1) --(10,-.1);
	\draw (8,.1) --(10,.1);
	\draw[fill] (0,0) circle(5pt);
	\draw[fill] (2,0) circle(5pt);
	\draw[fill] (4,0) circle(5pt);
	\draw[fill] (8,0) circle(5pt);
	\draw[fill] (10,0) circle(5pt);
	\node at (0,-.8) {$s_0$};
	\node at (2,-.8) {$s_1$};
	\node at (4,-.8) {$s_2$};
	\node at (8,-.8) {$s_{n-1}$};
	\node at (10,-.8) {$s_n$};
	\node at (6,0) {$\cdots$};
\end{tikzpicture}
\end{center}

\subsection{Symplectic Groups}

Let $\W$ be a symplectic space of dimension $2n$ over a field $F$ with symplectic form $Q$.  The \emph{symplectic group} $\Sp(\W)$ is the group of linear automorphisms of $\W$ that preserve the symplectic form, i.e., those linear operators $T:\W\to \W$ such that for all $u,v$ in $\W$,
$$Q(Tu,Tv)=Q(u,v).$$

Under a symplectic basis, $\Sp(\W)$ becomes the matrix subgroup of $\GL_{2n}(F)$ given by
$$\Sp(\W)=\left\{\mtwo{a}{b}{c}{d}\in\GL_{2n}(F):
\begin{array}{l}
\tp{a}d-\!\tp{c}b=1\\
\tp{a}c=\!\tp{c}a\\
\tp{b}d=\!\tp{d}b
\end{array}
\right\}.$$
Note that if $n=1$, then $\Sp(\W)=\SL_2(F)$.
\subsubsection{Symplectic Chevalley Groups}\label{chevalleygroup}

The classical symplectic group over $F$ will be constructed as a Chevalley group from the symplectic Lie algebra over $\C$, \cf \cite{steinberg}, \cite{carter}.  

Let $\W$ be a symplectic vector space over $\C$, let $\U$ be the universal enveloping algebra of $\mf{sp}(\W)$, and let $\U_\Z$ be the subalgebra of $\U$ generated by the divided powers $X_\upalpha^m\big/m!$.  Under the natural representation of $\mf{sp}(\W)$, and hence of $\U$, on $\W$, the elements of $\U$ may be viewed as members of a matrix algebra.  In this setting, each $X_\upalpha^2=0$, so $\U_\Z$ is generated by 1 and $\{X_\upalpha:\upalpha\in\Phi\}$.   Therefore, since $\W$ is the natural representation, the lattice 
$$L=\Z e_1\oplus\dots\oplus\Z e_n\oplus\Z f_1\oplus\dots\oplus\Z f_n$$
is a $\U_\Z$-invariant lattice.

Let $F$ be any field.  For an element $t$ of $F$ and a root $\upalpha$ in $\Phi$, one obtains the natural action of
$$x_\upalpha(t)=\exp(tX_\upalpha)=\sum_{n=0}^\infty t^n\frac{X_\upalpha^n}{n!}=1+tX_\upalpha$$
on $\W_F=L\otimes_\Z F$, which is the $F$-span of $e_1,\dots,e_n,f_1,\dots,f_n$.
In other words, the element $x_\upalpha(t)$ may be interpreted as the actual matrix $1+tX_\upalpha$ with entries in $F$.

Define the \emph{Chevalley group} $G$ to be the group generated by
$$\{x_\upalpha(t): t\in F,\upalpha\in\Phi\}.$$
This group is exactly the classical symplectic group over $F$.

The following elements play an important role in the theory of Chevalley groups.  For $t\in F^\times$, define
\begin{align*}
w_\upalpha(t)&=x_\upalpha(t)x_{-\upalpha}(-1/t)x_\upalpha(t),\\
h_\upalpha(t)&=w_\upalpha(t)w_\upalpha(-1).
\end{align*}
For $i=1,\dots,n$, let $w_i=w_{\upalpha_i}(1).$  The elements $w_1,\dots,w_n$ form a set of representatives of the generators of the Weyl group $W$ in $G$.

Suppose now that $F$ is a nonarchimedian local field with uniformizer $\varpi$ and ring of integers $\mf{O}$.  For any affine root $\upgamma=\upalpha+m$, define the elements
\begin{align*}
x_\upgamma(t)&=x_\upalpha(\varpi^m t),\\
w_\upgamma(t)&=w_\upalpha(\varpi^m t).
\end{align*}  
In addition, define the affine root group
$$\mf{X}_\upgamma=\{x_\upgamma(t):t\in\mf{O}\}.$$
 
For the simple affine root $\upalpha_0=1-\upalpha_\ast$, let 
$$w_0=w_{\upalpha_0}(1)=w_{\upalpha_\ast}(\varpi^{-1}).$$
The elements $w_0,w_1,\dots,w_n$ form a set of representatives in $G$ of the generators of the affine Weyl group, \cf \cite{iwahori-matsumoto}.  

As an abuse of notation, the element $s_\upgamma$ of $W^\text{aff}$ will be identified with its representative $w_\upgamma(1)$ in the Chevalley group.  In particular, these $w_\upgamma(1)$ will often be referred to as elements of the affine Weyl group.

\subsubsection{Generators and Relations}\label{relations}

In \cite{steinberg}, Steinberg gives relations for the symplectic Chevalley group generated by the $x_\upalpha(t)$.  Of particular interest, for any $\upalpha,\upbeta\in\Phi$ and $t,u\in F$ (nonzero where necessary),
\begin{enumerate}[ \mbox{ } (R1)]
\item $x_\upalpha(t)x_\upalpha(u)=x_\upalpha(t+u)$;
\item $\big(x_\upalpha(t),x_\upbeta(u)\big)=\prod x_{i\upalpha+j\upbeta}(c_{ij}t^iu^j)$, for $\upalpha+\upbeta\neq0$;
\item $w_\upalpha(t)x_\upalpha(u)w_\upalpha(-t)=x_{-\upalpha}(-t^{-2}u)$;
\item $h_\upalpha(t)x_\upalpha(u)h_\upalpha(-t)=x_{\upalpha}(t^2 u)$
\item $h_\upalpha(t)h_\upalpha(u)=h_\upalpha(tu)$.
\end{enumerate}
In (R2), the product is taken over the roots that are positive linear combinations of $\upalpha$ and $\upbeta$, and the $c_{ij}$ are constants which will not play a role in this paper.

Besides (R5), the relations are a consequence of (R1) and (R2) if $G=\Sp_{2n}(F)$ for $n\geq2$ or of (R1) and (R3) if $G=\SL_2(F)$.  The two appropriate relations along with (R5) form a complete set of relations for $\Sp_{2n}(F)$.

\subsubsection{Central Extensions}

A group $G'$ is a \emph{central extension} of $G$ if there is a surjective homomorphism from $G'$ to $G$ whose kernel lies in the center of $G'$.  A central extension $E$ of $G$ is \emph{universal} if it a central extension of any other central extension of $G$.  A universal central extension $E$ of the symplectic Chevalley group $G$ exists and is the group defined abstractly using only relations (R1) and (R2).  (In the $\SL_2(F)$ case, the universal central extension is abstractly defined using only relations (R1) and (R3).)  Hence,  the relations (R1) through (R4) can be lifted to any central extension of $G$, \cf \cite{steinberg}.

The preceding paragraph implies that the elements $x_\upalpha(t)$ lift \emph{uniquely} to elements $x_\upalpha'(t)$ in any central extension $G'$.  Therefore, $w_\upalpha(t)$ and $h_\upalpha(t)$ lift \emph{canonically} to $w_\upalpha'(t)$ and $h_\upalpha'(t)$ in $G'$ via the formulas
\begin{align*}
w_\upalpha'(t)&=x_\upalpha'(t)x_{-\upalpha}'(-1/t)x_\upalpha'(t),\\
h_\upalpha'(t)&=w_\upalpha'(t)w_\upalpha'(-1).
\end{align*}
Note that (R5) does not hold in a nontrivial central extension $G'$.

\subsection{Induced Representations and Hecke Algebras}\label{heckealgebras}

Let $H$ be an open compact subgroup of a locally compact, totally disconnected group $G$, with the Haar measure on $G$ normalized to give $\vol(H)=1$.  Denote by $C(G)$ the space of complex-valued functions on $G$ and by $C_0(G)$ the subspace of compactly supported, locally constant functions.  Fix a complex representation $(\uppi,V)$ of $G$ and a character $\upchi$ of $H$.  The objects of interest are
\begin{enumerate}[ \mbox{ } 1.]
\item the (compactly) induced representation $(\upsigma,U)=\ind_H^G\upchi$, where $\upsigma$ acts by right translation on the vector space
$$U=\left\{\upphi\in C(G):
	\begin{tabular}{l}
		$\upphi(hx)=\upchi(h)\upphi(x),h\in H,x\in G$\\
		$\upphi\text{ has compact support modulo }H$
	\end{tabular}\right\};$$
\item the subspace $V^{H,\bar{\upchi}}$ of $V$ given by
$$V^{H,\bar{\upchi}}=\big\{v\in V:\uppi(h)v=\bar{\upchi}(h)v,h\in H\big\};$$
\item the $\upchi$-spherical Hecke algebra $\HH=\HH(\biinv{G}{H};\upchi)$ defined by
$$\HH=\big\{f\in C_0(G): f(h_1xh_2)=\upchi(h_1)f(x)\upchi(h_2),h_i\in H,x\in G\big\},$$
with convolution
$$(f\cdot g)(y)=\int_G f(x)g(x^{-1}y)dx,$$
and identity element $\dot\upchi$, which is the extension of $\upchi$ by 0 to all of $G$.
\end{enumerate}
In this setting, Frobenius Reciprocity gives 
$$\op{Hom}_H(\upchi,\uppi)\cong\op{Hom}_G(\upsigma,\uppi),$$
and $V^{H,\bar{\upchi}}$ is an $\HH$-module under the action
$$\uppi(f)v=\int_G f(x)\big(\uppi(x)v\big)dx.$$

\begin{prop}\label{endomorphism}
$\HH$ is isomorphic to $\op{End}(U)$, and the elements of $\HH$ which are not supported on $H$ act on $U$ as trace-zero endomorphisms.
\end{prop}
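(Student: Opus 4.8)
The plan is to establish the isomorphism $\HH \cong \op{End}(U)$ first, and then read off the trace-zero statement as a consequence of the structure of that isomorphism. The key observation is that $U = \ind_H^G \upchi$ is the induction of a character from an open compact subgroup, so $U$ is the direct sum $\bigoplus_{x} \upchi_x$ of the $H$-isotypic lines indexed by representatives $x$ of the cosets $\lQ{H}{G}$ on which the relevant conjugate character agrees with $\upchi$. An element $f \in \HH$ acts on $U$ by convolution, $(\upsigma(f)\upphi)(y) = \int_G f(x)\upphi(x^{-1}y)\,dx$, and the point is that this action is faithful and surjective onto the full endomorphism ring. First I would set up the standard correspondence: to $f \in \HH$ associate the operator $\upsigma(f)$, and check that $f \mapsto \upsigma(f)$ is an algebra homomorphism, where the multiplication on $\HH$ is convolution and on $\op{End}(U)$ is composition. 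That convolution corresponds to composition is the usual unwinding of Fubini together with the right-invariance of Haar measure, using $\vol(H)=1$.

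Next I would prove injectivity and surjectivity. For injectivity, if $\upsigma(f)=0$ then testing against the characteristic-type function $\upphi_0 = \dot\upchi$ (the identity of $\HH$, extended by zero off $H$) recovers $f$ itself, since $\upsigma(f)\dot\upchi = f$ up to the normalization $\vol(H)=1$; hence $f=0$. For surjectivity I would exhibit, for each pair of cosets, an elementary endomorphism sending one isotypic line into another and realize it as $\upsigma(f)$ for a suitable $f \in \HH$ supported on a single double coset $HxH$. Because $\HH$ consists exactly of the $(\upchi,\upchi)$-bi-equivariant compactly supported functions, and such functions are spanned by the characteristic functions of double cosets $HxH$ weighted by $\upchi$, these elementary operators span $\op{End}(U)$. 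Concretely, a function supported on $HxH$ and satisfying the bi-equivariance condition exists precisely when $\upchi$ and its $x$-conjugate agree on $H \cap x H x^{-1}$, which is exactly the condition for the corresponding isotypic lines to be matched, so the dimensions line up and the map is a bijection.

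Finally, the trace-zero assertion. An element $f \in \HH$ supported on $H x H$ with $x \notin H$ sends each isotypic line of $U$ into a \emph{different} isotypic line (the coset of $x^{-1}y$ differs from the coset of $y$ whenever $x \notin H$), so in the basis of $U$ adapted to the coset decomposition the matrix of $\upsigma(f)$ has all diagonal blocks equal to zero. Hence $\tr \upsigma(f) = 0$. Since every $f$ not supported on $H$ is a finite linear combination of such single-double-coset functions with $x \notin H$, linearity of the trace gives the claim. The \textbf{main obstacle} I anticipate is the bookkeeping in the surjectivity step: one must verify carefully that the bi-equivariance constraint on $f$ along $H x H$ is compatible with the character $\upchi$ (i.e.\ that $\upchi$ is trivial on the relevant intersection $H \cap x H x^{-1}$ modulo the twist), so that a nonzero $f$ supported there actually exists and produces the desired matrix entry; this is precisely where the hypothesis that $\upchi$ is a character of $H$ (rather than an arbitrary representation) and the totally disconnected structure of $G$ enter, and it is the place where one must avoid sloppiness about which double cosets actually contribute to $\HH$.
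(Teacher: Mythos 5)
Your trace-zero argument is essentially the paper's own (the diagonal entry of convolution by $f$, in the basis of functions supported on single cosets $Hx$, is $(f\cdot\upphi)(x)=0$ when $f$ vanishes on $H$), and your injectivity mechanism $\upsigma(f)\dot\upchi=f$ is exactly the paper's inverse map $T\mapsto T(\dot\upchi)$. But your surjectivity step contains a genuine error: you take $\op{End}(U)$ to be the \emph{full} linear endomorphism ring and try to realize elementary matrix units between coset lines. That is the wrong target, and the mechanism cannot work. The intended (and only tenable) reading is $\op{End}_G(U)$, the $G$-equivariant endomorphisms: in the paper's later applications $\HH(\biinv{J_1}{J};\upchi)$ is two-dimensional while $\dim U_1=2$ and $\dim U_0=6$, so $\HH\cong\op{End}_\C(U)$ is simply false there. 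Concretely, convolution by $f$ commutes with the right-translation action of $G$, and in the basis $\{\upphi_y\}$ indexed by $y\in\lQ{H}{G}$ (normalized so that $\upphi_y(y)=1$) its matrix entries are $(f\cdot\upphi_y)(z)=f(zy^{-1})$, which depend only on $zy^{-1}$; hence every operator in the image of $\HH$ treats all pairs of cosets lying over the same double coset identically, and an elementary matrix unit is never of this form except in degenerate cases. The dimension count fails too: $\dim\HH$ equals the number of double cosets $HxH$ carrying a bi-equivariant function (your Mackey condition $\upchi^x=\upchi$ on $H\cap xHx^{-1}$), which is in general far smaller than $(\dim U)^2$, so "the dimensions line up" does not hold for the full endomorphism ring.

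Two further corrections. First, your description of $U$ is wrong: $U=\ind_H^G\upchi$ has a line $\C\upphi_x$ for \emph{every} coset $Hx$, with no compatibility condition, since $\upchi$ is a character of $H$ and left $\upchi$-equivariance on a single coset is unobstructed; the conjugate-character condition governs the support of $\HH$, not the lines of $U$ — if it restricted the cosets as you claim, the basis in your own trace argument would be incomplete. Second, the repair of surjectivity onto $\op{End}_G(U)$ is short: given $G$-equivariant $T$, set $f=T(\dot\upchi)$; since $\upsigma(h)\dot\upchi=\upchi(h)\dot\upchi$, equivariance forces $f$ to be right $\upchi$-equivariant as well as left, so $f\in\HH$, and $T$ agrees with convolution by $f$ because the $G$-translates of $\dot\upchi$ span $U$. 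With this reading your proposal collapses onto the paper's argument; as written, the surjectivity claim would prove a false statement.
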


\begin{proof}
The maps for this isomorphism are given, but the details are omitted.  The map $\HH\to\op{End}(U)$ is given by $f\mapsto T$ with $T(\upphi)=f\cdot\upphi$.  The inverse map $\op{End}(U)\to\HH$ is given by $T\mapsto f$ with $f(x)=T(\dot{\upchi})(x)$.

Suppose that $f\in\HH$ is not supported on $H$ and let $\upphi\in U$ be supported on a coset $Hx$.  In this setting, $f(y)\upphi(y^{-1}x)=0$ for all $y\in G$, so
$$(f\cdot\upphi)(x)=\int_Gf(y)\upphi(y^{-1}x)dy=0.$$
The functions supported on cosets $Hx$, for $x\in\lQ{H}{G}$, form a basis of $U$, so this calculation implies that the trace of $f$ is zero.
\end{proof}

There is additional structure on $\HH$, namely the $\ast$-operation given by
$$f^\ast(x)=\overline{f(x^{-1})}$$
and the trace given by
$$\tr(f)=f(1).$$
Following \cite{bushnell-henniart-kutzko}, $\HH$ is a normalized Hilbert algebra with involution $f\mapsto f^\ast$ and scalar product $[\;,\,]$, given by
$$[f,g]=\tr(f^\ast\cdot g).$$
This structure yields a Plancherel formula on $\HH$, meaning that $\tr(f)$ may be defined in terms of the traces $\tr\uprho(f)$, for irreducible representations $\uprho$ of $\HH$.  For details, see 3.2 of \cite{bushnell-henniart-kutzko}.

If $\uppi$ is unitary, under the identification $\HH=\op{End}(U)$,  $\ast$ is the adjoint operator and $\tr$ is the usual trace map.  By definition,
$$\tr(f^\ast\cdot f)=\int_G f^\ast(x)f(x^{-1})dx=||f||^2,$$
so the $L^2$ norm may be expressed in terms of $\ast$ and $\tr$.

For the remainder of this section, assume that $G$ is compact.

\begin{prop}\label{T_x}
Suppose that $V^{H,\bar\upchi}\neq0$ and that $\HH$ is supported at most on $H$ and $HxH$, for $x\notin H$.  If $\dim(V)<\dim(U)$, then there exists nontrivial $T_x\in\HH$ supported on $HxH$ which acts on the eigenspace $V$ by
$$\uplambda=\frac{\dim(U)-\dim(V)}{\dim(V)};$$
that is, $T_x$ satisfies the quadratic relation 
$$(T_x-\uplambda)(T_x+1)=0.$$
\end{prop}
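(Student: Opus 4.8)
The plan is to exploit the isomorphism $\HH\cong\op{End}(U)$ from Proposition~\ref{endomorphism} together with the support hypothesis. Since $G$ is compact and $\HH$ is supported only on $H$ and $HxH$, the algebra $\HH$ is spanned by the identity $\dot\upchi$ (supported on $H$) and a single element supported on $HxH$; call a nonzero such element $T_x$. First I would record that $\uppi(f)$ for $f\in\HH$ acts on $V^{H,\bar\upchi}$, and that by Frobenius reciprocity $V^{H,\bar\upchi}$ is the $\HH$-module on which we must read off the eigenvalue. Because $\dim V < \dim U$, the representation $U=\ind_H^G\upchi$ is reducible, and $V$ appears inside it; the eigenvalue $\uplambda$ should emerge from comparing dimensions.

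\textbf{The trace computation.} The key step is a trace calculation. By Proposition~\ref{endomorphism}, any element of $\HH$ not supported on $H$ — in particular $T_x$ — acts on $U$ as a \emph{trace-zero} endomorphism. Now under the $\HH$-action, $U$ decomposes into isotypic pieces, one of which is the $V^{H,\bar\upchi}$-eigenspace where $T_x$ acts by the scalar $\uplambda$; on the complementary part $T_x$ acts by some scalar $\upmu$ (since $\HH$ has dimension $2$, it has at most two distinct eigenvalues on $U$, and the element $T_x$ satisfies a quadratic). The idea is to write $0=\tr_U(T_x)=\uplambda\cdot d_1+\upmu\cdot d_2$, where $d_1,d_2$ are the multiplicities/dimensions of the two eigenspaces inside $U$. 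Normalizing so that $T_x$ satisfies $(T_x-\uplambda)(T_x+1)=0$ forces $\upmu=-1$, and then the trace-zero condition pins down the ratio $\uplambda/1$ in terms of the dimensions, yielding
$$\uplambda=\frac{\dim(U)-\dim(V)}{\dim(V)}.$$
To make this precise I would identify $d_1=\dim(V)$ (the dimension of the eigenspace realizing $V$ inside $U$, via $\op{Hom}_G(\upsigma,\uppi)\cong\op{Hom}_H(\upchi,\uppi)=V^{H,\bar\upchi}$) and $d_2=\dim(U)-\dim(V)$, so that $\tr_U(T_x)=\uplambda\dim(V)-\bigl(\dim(U)-\dim(V)\bigr)=0$ gives exactly the claimed $\uplambda$.

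\textbf{Existence and the quadratic relation.} To produce $T_x$ itself and confirm it is nontrivial, I would start from \emph{any} nonzero $f\in\HH$ supported on $HxH$ (such $f$ exists once $V^{H,\bar\upchi}\neq 0$ guarantees the double coset $HxH$ carries a $\upchi$-bi-equivariant function), view it in $\op{End}(U)$, subtract a suitable multiple of the identity to diagonalize, and rescale so the two eigenvalues are $\uplambda$ and $-1$. Since $f$ is trace-zero and not a scalar (it kills the $H$-supported part), it has at least two distinct eigenvalues on $U$; with $\dim\HH=2$ it satisfies a monic quadratic, namely $(T_x-\uplambda)(T_x+1)=0$. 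The main obstacle I anticipate is justifying that $T_x$ has \emph{exactly} two eigenvalues with the correct multiplicities — i.e. that the $\uplambda$-eigenspace is precisely the copy of $V$ and the $(-1)$-eigenspace accounts for the rest — rather than some finer spectral decomposition. This requires knowing that $\HH$ is $2$-dimensional (forced by the support hypothesis together with compactness of $G$), so that $T_x$ generates $\HH$ over the identity and its minimal polynomial is genuinely quadratic; once that dimension count is nailed down, the eigenvalue identity follows from the single trace-zero equation.
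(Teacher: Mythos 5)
Your argument reproduces the paper's proof essentially step for step: reducibility of $U$ (from $\dim V<\dim U$ via Frobenius reciprocity) combined with the support hypothesis forces $\HH\cong\op{End}(U)$ to be exactly $2$-dimensional, and then the trace-zero property from Proposition \ref{endomorphism}, applied to the two eigenspaces with the normalization $\upmu=-1$, gives $\uplambda=\bigl(\dim(U)-\dim(V)\bigr)/\dim(V)$ via $\uplambda\dim(V)+\upmu\bigl(\dim(U)-\dim(V)\bigr)=0$. One small correction of attribution: the existence of a nontrivial element supported on $HxH$ does not follow from $V^{H,\bar\upchi}\neq0$ alone nor from ``support plus compactness'' as your parenthetical suggests (the support hypothesis only gives $\dim\HH\leq2$); the lower bound $\dim\HH\geq2$ comes precisely from the reducibility of $U$ that you noted at the outset, which is exactly how the paper secures the existence of $T_x$.
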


\begin{proof}
Assume the hypothesis and let $(\uppi^\ast,V^\ast)$ be the dual representation of $(\uppi,V)$.  Since $V^\ast$ has smaller dimension than $U$, it may be realized, via Frobenius reciprocity, as a quotient of $U$; hence, $U$ is reducible.
Therefore, $\HH=\op{End}(U)$ is exactly 2-dimensional, so there exists a nontrivial element $T_x$ supported on $HxH$.  Since $T_x$ is not supported on $H$, it acts as a trace-zero endomorphism on $U$ (by Proposition \ref{endomorphism}) with two eigenspaces $V$ and $V^\perp$.  Let $\uplambda$ and $\upmu$ be their respective eigenvalues so that 
$$\uplambda\dim(V)+\upmu\dim(V^\perp)=0.$$
Normalize $T_x$ to act by $\upmu=-1$ on $V^\perp$; then, $T_x$ acts on $V$ by
$$\uplambda=\frac{\dim(V^\perp)}{\dim(V)},$$
hence the proposition.
\end{proof}

As a remark, the $\HH$-modules $U$ and $V^{H,\bar\upchi}$ are compatible in the following way.  Let $V_1,\dots,V_n$ be the irreducible representations of $G$ such that $V_i^{H,\bar\upchi}$ is nontrivial.  The matrix coefficients $v_i^\ast(\uppi_i(g^{-1})v_i)$, for $v_i^\ast\in V_i^\ast$ and $v_i\in V_i$, give a part $\bigoplus V_i\otimes V_i^\ast$ of the decomposition of $C(G)$, and hence,
$$U^{H,\bar\upchi}=\bigoplus_{i=1}^n V_i^{H,\bar\upchi}\otimes V_i^\ast.$$
In particular, if $\upphi(g)=v_i^\ast(\uppi_i(g^{-1})v_i)$, then
\begin{align*}
(f\cdot\upphi)(x)&=\int_G f(g)v_i^\ast\big(\uppi_i(x^{-1}g)v_i\big)dg\\
&=v_i^\ast\big(\uppi_i(x^{-1})(\uppi_i(f)v_i)\big)\\
&=\uppi_i(f)(v_i\otimes v_i^\ast).
\end{align*}

\subsection{Additive Characters of $\Q_p$}

For an additive character of $\uppsi$ of $\Q_p$, if there exists a smallest integer $c$ such that $\uppsi$ is trivial on $p^c\Z_p$, then $c$ is called the \emph{conductor of} $\uppsi$.

There is a natural projection from $\Q_p$ to $\rQ{\Q_p}{\Z_p}$, a canonical embedding of $\rQ{\Q_p}{\Z_p}$ into the $p$-torsion of $\rQ{\Q}{\Z}$, and an embedding of $\rQ{\Q}{\Z}$ into $\C^\times$ given by $x\mapsto e^{2\pi ix}$.  The composition of these maps 
$$\Q_p\proj\rQ{\Q_p}{\Z_p}\inj\rQ{\Q}{\Z}\inj\C^\times$$
defines an additive character $\uppsi_1:\Q_p\to\C^\times$ of conductor $c=0$.

For $a$ in $\Q_p^\times$, define the character $\uppsi_a:\Q_p\to\C^\times$ by
$$\uppsi_a(x)=\uppsi_1(ax),$$
which has conductor $c=-\op{val}(a)$.  These $\uppsi_a$ account for all nontrivial smooth additive characters of $\Q_p$, \cf \cite{washington}.

\subsection{Fourier Transform}\label{fouriertransform}

Let $S(\Q_p)$ denote the set of Schwarz functions on $\Q_p$, i.e., the set of smooth, compactly supported, complex-valued functions on $\Q_p$. Fix an additive character $\uppsi$ of $\Q_p$ of conductor $c$.  The Fourier transform (with respect to $\uppsi$) on $S(\Q_p)$ is given by $f\mapsto\widehat{f}$ where
$$\widehat{f}(y)=\int_{\Q_p}\uppsi(2uy)f(u)du.$$
The Haar measure on $\Q_p$ will be normalized to give $\widehat{\widehat{f}\;}(y)=f(-y).$  The appearance of a 2 in the Fourier transform affects the role of the conductor of $\uppsi$ for $p=2$, so it will be convenient to define
$$\updelta=
\begin{cases}
1&\text{ if }p=2\\
0&\text{ if }p\neq2.
\end{cases}$$
For $m\in\Z$, denote the characteristic function of $p^m\Z_p$ by $\upphi_m$.  The proofs of following statements are straightforward computations.

\begin{enumerate}[ \mbox{ } 1.]
\item If $f$ is supported on $p^m\Z_p$ and constant on $p^n\Z_p$-cosets, then $\widehat{f}$ is supported on $p^{-n+(c-\updelta)}\Z_p$ and constant on $p^{-m+(c-\updelta)}\Z_p$-cosets.
\item The Fourier transform of $\upphi_m$ is $\widehat\upphi_m=\vol(p^m\Z_p)\upphi_{-m+(c-\updelta)}$.
\item The normalized volume of $\Z_p$ is $\vol(\Z_p)=p^{-(c-\updelta)/2}$.
\end{enumerate}
Hence, if $c=\updelta$, then the Fourier transform of $\upphi_m$ is $\widehat{\upphi}_m=p^{-m}\upphi_{-m}$.

Suppose now that $\Y$ is a vector space over $\Q_p$, where the elements of $\Y$ are considered as column vectors with respect to some basis.  Let $S(\Y)$ denote the set of Schwarz functions on $\Y$ and fix an additive character $\uppsi$ of $\Q_p$.  The Fourier transform with respect to $\uppsi$ on $S(\Y)$ is defined by $\mathbf{f}\mapsto\widehat{\mathbf{f}}$ where
$$\widehat{\mathbf{f}}(y)=\int_\Y \uppsi(2\;\tp{u}y)\mathbf{f}(u)du.$$
The Haar measure on $\Y$ is normalized to give $\widehat{\widehat{\mathbf{f}}}(-y)=\mathbf{f}(y)$.

The space $S(\Y)$ has a tensor product structure
$$S(\Y)=S(\Q_p)\otimes\cdots\otimes S(\Q_p),$$
and the Fourier transform is well-behaved in this setting.  In particular, if $\mathbf{f}=f_1\otimes\cdots\otimes f_n$, then 
$\widehat{\mathbf{f}}=\widehat{f}_1\otimes\cdots\otimes\widehat{f}_n.$

\subsection{Weil Representation}

The following exposition of the Weil representation mostly follows \cite{kudla}.  Let $F$ be a nonarchimedian local field, let $\W$ be a $2n$-dimensional symplectic space over $F$ with symplectic form $Q$, and let $\uppsi$ be an additive character of $F$.

The Heisenberg group $H(\W)$ is defined to be the set $\W\times F$ with group multiplication
$$(u,s)\cdot(v,t)=\big(\,u+v\,,\;s+t+Q(u,v)\big).$$
The center of $H(\W)$ is $Z=\{(0,t)\}= F$.  Since the symplectic group $\Sp(\W)$ preserves $Q$, it acts as a group of automorphisms on $H(\W)$ by
$$g(v,t)=(gv,t).$$

Let $(\uprho,S)$ be a representation of $H(\W)$ with central character $\uppsi$; that is, $\uprho(0,t)\upphi=\uppsi(t)\upphi$ for any $\upphi$ in $S$.  One can twist $\uprho$ by $g\in\Sp(\W)$ to obtain the representation $(\uprho^g,S)$ given by
$$\uprho^g(v,t)=\uprho\big(g(v,t)\big)=\uprho(gv,t),$$
which also has central character $\uppsi$.   By the Stone-von Neumann theorem, $\uprho^g$ must be isomorphic to $\uprho$, so, for each $g\in\Sp(\W)$, there exists an operator $T(g):S\to S$, unique up to a scalar in $\C^\times$, which intertwines these two representations; that is,
$$T(g)\uprho=\uprho^g T(g).$$
The operator $T:\Sp(\W)\to\GL(S)\big/\C^\times$ is a projective representation of $\Sp(\W)$.  Let $\ce{\Sp}(\W)$ be a two-fold central extension of $\Sp(\W)$, 
$$1\to\{\pm1\}\to\ce{\Sp}(\W)\to\Sp(\W)\to1.$$
It is known that $T$ lifts uniquely to a linear representation 
$$\upomega:\ce{\Sp}(\W)\to\GL(S),$$ 
called the \emph{Weil representation} with respect to $\uppsi$, \cf \cite{kudla}, \cite{weil}.

\subsubsection{Models}

For any closed subgroup $\mathcal{Z}$ of $\W$ define
$$\mathcal{Z}^\perp=\big\{v\in \W:\uppsi\big(Q(v,z)\big)=1\text{ for all }z\in\mathcal{Z}\big\};$$
this is also a closed subgroup of $\W$.   Define $H(\mathcal{Z})$ to be the subgroup $\mathcal{Z}\times F$ of the Heisenberg group, and assume that $\mathcal{Z}\subset\mathcal{Z}^\perp$.  The character $\uppsi$ can be extended trivially to $H(\mathcal{Z})$ by $\uppsi(z,t)=\uppsi(t)$; indeed,
\begin{align*}
\uppsi\big((z_1,t_1)\cdot(z_2,t_2)\big)
&=\uppsi\big(z_1+z_2,t_1+t_2+Q(z_1,z_2)\big)\\
&=\uppsi(t_1)\uppsi(t_2)\uppsi\big(Q(z_1,z_2)\big)\\
&=\uppsi(z_1,t_1)\uppsi(z_2,t_2).
\end{align*}
Define $(\uprho,S_\mathcal{Z})$ to be the induced representation $\ind_{H(\mathcal{Z})}^{H(\W)}\uppsi$ of $H(\W)$; that is, $\uprho$ acts by right translation on the space $S_\mathcal{Z}$ of smooth functions $f$ on $H(\W)$ which satisfy the following properties:
\begin{enumerate}[ \mbox{ } 1.]
\item $f$ has compact support modulo $H(\mathcal{Z})$;
\item $f(zh)=\uppsi(t)f(h)$ for $(z,t)\in H(\mathcal{Z})$ and $h\in H(\W)$;
\item there is an open compact subgroup $K_f$ of $H(\W)$ such that $f(hk)=f(h)$ for $k\in K_f$ and $h\in H(\W)$.
\end{enumerate}
The restriction of $\uprho$ to the center of $H(\W)$ is given by
$$\big(\uprho(0,t)f\big)(h)=f\big(h\cdot(0,t)\big)=f\big((0,t)\cdot h\big)=\uppsi(t)f(h),$$
hence $\uprho$ has central character $\uppsi$.

For different such closed subgroups $\mathcal{Z}$ of $\W$, one can build a model for $\uprho$ and hence a model for $\upomega$.  If $\mathcal{Z}$ is a maximal isotropic subspace of $\W$, the model is called Shr\"odinger's model.

\subsubsection{Schr\"odinger's Model}

Let $\X+\Y$ be a complete polarization of $\W$.  As $\X$ is a maximal isotropic subspace of $\W$, $Q$ is identically zero on $\X$, and hence $\X=\X^\perp$.  Following the construction above, one obtains the space of functions $S_\X$.  This space is canonically isomorphic to the space $S(\Y)$ of Schwarz functions on $\Y$ via the map $S_\X\to S(\Y)$ given by $f\mapsto\upphi$, where 
$$\upphi(y)=f(y,0).$$

\begin{prop}
In Schr\"odinger's model, $\uprho$ takes the form
\begin{align*}
\big(\uprho(x,0)\upphi\big)(y_0)&=\uppsi\big(-2Q(x,y_0)\big)\upphi(y_0)\\
\big(\uprho(y,0)\upphi\big)(y_0)&=\upphi(y+y_0)\\
\big(\uprho(0,t)\upphi\big)(y_0)&=\uppsi(t)\upphi(y_0).
\end{align*}
\end{prop}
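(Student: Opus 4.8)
The plan is to transport the right-translation action of $\uprho$ on $S_\X$ across the isomorphism $f\mapsto\upphi$, $\upphi(y_0)=f(y_0,0)$, and then read off the three formulas by a direct Heisenberg-group computation. Since $\uprho$ acts by right translation, for any $g\in H(\W)$ we have $\big(\uprho(g)\upphi\big)(y_0)=\big(\uprho(g)f\big)(y_0,0)=f\big((y_0,0)\cdot g\big)$, so the whole problem reduces to multiplying out $(y_0,0)\cdot g$ in $H(\W)$ and then using the left-equivariance property $f(zh)=\uppsi(t)f(h)$ for $(z,t)\in H(\X)$ to re-express the result in terms of $\upphi$.

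I would handle the three generators separately. For $g=(y,0)$ with $y\in\Y$, the product is $(y_0,0)\cdot(y,0)=\big(y_0+y,Q(y_0,y)\big)$, and since $\Y$ is isotropic the central term $Q(y_0,y)$ vanishes; thus the value is $f(y_0+y,0)=\upphi(y+y_0)$, which is the second formula. For $g=(0,t)$, the product is $(y_0,t)=(0,t)\cdot(y_0,0)$, and applying the equivariance property with the central element $(0,t)\in H(\X)$ gives $\uppsi(t)\upphi(y_0)$, which is the third formula.

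The essential case, and the one I expect to be the main obstacle, is $g=(x,0)$ with $x\in\X$. Here the $\X$-direction is precisely where the subgroup $H(\X)$ lives, but the equivariance property $f(zh)=\uppsi(t)f(h)$ applies to \emph{left} multiplication, whereas $(x,0)$ appears on the right. The product is $(y_0,0)\cdot(x,0)=\big(x+y_0,Q(y_0,x)\big)$, and the resolution is to factor it as $\big(x+y_0,Q(y_0,x)\big)=\big(x,-2Q(x,y_0)\big)\cdot(y_0,0)$, which follows from the Heisenberg multiplication together with the skew-symmetry $Q(y_0,x)=-Q(x,y_0)$ — this is exactly where the factor of $-2$ is produced. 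Since $\big(x,-2Q(x,y_0)\big)\in H(\X)$, applying the equivariance property gives $f\big((y_0,0)\cdot(x,0)\big)=\uppsi\big(-2Q(x,y_0)\big)\upphi(y_0)$, which is the first formula. Taken together these three computations establish the proposition; the only real subtlety is correctly tracking the cocycle term $Q$ and the sign produced when an $\X$-element is moved from the right to the left.
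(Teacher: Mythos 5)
Your proposal is correct and follows essentially the same route as the paper: transport the right-translation action through $\upphi(y_0)=f(y_0,0)$, multiply out in $H(\W)$, and factor the $H(\X)$-component to the left so the equivariance $f(zh)=\uppsi(t)f(h)$ applies, with the skew-symmetry $Q(y_0,x)=-Q(x,y_0)$ producing the factor $-2$ exactly as you describe. The only cosmetic difference is that the paper performs a single computation for the general element $(x+y,t)$, whereas you verify the three generating cases separately.
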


\begin{proof}
Let $t\in F$, $x\in\X$, and $y_0,y\in\Y$.  Then,
\begin{align*}
\big(\uprho(x+y,t)\upphi\big)(y_0)
&=f\big((y_0,0)\cdot(x+y,t)\big)\\
&=f\big(x+y+y_0\,,\,t+Q(y_0,x+y)\big)\\
&=f\big((x,t+Q(y_0,x)-Q(x,y+y_0))\cdot(y+y_0,0)\big)\\
&=\uppsi\big(x,t-Q(x,y+2y_0)\big)f(y+y_0,0)\\
&=\uppsi\big(t-Q(x,y+2y_0)\big)\upphi(y+y_0),
\end{align*}
hence the proposition.
\end{proof}

The group $\Sp(\W)$, under the symplectic basis corresponding to the polarization $\X+\Y$, is generated by the following types of matrices:
\begin{align*}
\underline{x}(b)&=\mtwo{1}{b}{}{1}\hskip10pt\text{ ($b$ a symmetric $n\times n$ matrix),}\\
\underline{h}(b)&=\mtwo{b}{}{}{\tpinv{b}}\hskip10pt\text{ ($b$ an invertible $n\times n$ matrix),}\\
\underline{w}&=\mtwo{}{1}{-1}{}.
\end{align*}

\begin{prop}
The intertwining map $T$ is given by
\begin{align*}
T\big(\underline{x}(b)\big)\upphi(y)&=\uppsi(\tp{y}by)\upphi(y)\\
T\big(\underline{h}(b)\big)\upphi(y)&=\upphi(\tp{b}y)\\
T(\underline{w})\upphi(y)&=\widehat{\upphi}(y).
\end{align*}
\end{prop}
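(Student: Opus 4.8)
The plan is to take as the \emph{definition} of $T(g)$ the intertwining relation $T(g)\uprho = \uprho^g T(g)$, i.e.\ $T(g)\uprho(v,t) = \uprho(gv,t)T(g)$ for all $(v,t)\in H(\W)$, and simply to verify that each of the three proposed operators satisfies this relation for the corresponding generator of $\Sp(\W)$. By the Stone--von Neumann theorem the intertwining operator is unique up to a scalar, so exhibiting one explicit operator for each generator suffices; since $\Sp(\W)$ is generated by the matrices $\underline{x}(b)$, $\underline{h}(b)$ and $\underline{w}$, this determines the projective representation $T$ on all of $\Sp(\W)$. To check the relation for a fixed $g$ it is enough to test it on a generating set of $H(\W)$. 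The center $Z$ is immediate, because both $\uprho$ and $\uprho^g$ send $(0,t)$ to the scalar $\uppsi(t)$; so only the families $\uprho(x,0)$ with $x\in\X$ and $\uprho(y,0)$ with $y\in\Y$ remain, and for these the explicit Schr\"odinger formulas of the preceding proposition are the only input. Throughout I will use that for $x\in\X$ and $y_0\in\Y$ one has $Q(x,y_0)=\tp{x}y_0$.

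For $\underline{x}(b)$, which fixes $\X$ pointwise and sends $y\mapsto by+y$, the $\X$-generators are trivial: both $T(\underline{x}(b))$ and $\uprho(x,0)$ are multiplication operators, hence commute. On an element $\uprho(y,0)$ I would expand both sides using the formula $(\uprho(x'+y,t)\upphi)(y_0)=\uppsi\big(t-Q(x',y+2y_0)\big)\upphi(y+y_0)$ from the previous proof, with $x'=by$. The verification then collapses to the scalar identity obtained by expanding $\tp{(y+y_0)}b(y+y_0)$ and $Q(by,y+2y_0)=\tp{y}b(y+2y_0)$ and using the symmetry $\tp{b}=b$ to cancel the cross terms, leaving exactly $\tp{y_0}by_0$, which is the multiplier of $T(\underline{x}(b))$. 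For $\underline{h}(b)$, which scales $\X$ by $b$ and $\Y$ by $\tpinv{b}$, both the $\X$- and $\Y$-cases reduce to the change of variables $y_0\mapsto\tp{b}y_0$ inside the argument of $\upphi$, using $\tp{x}\tp{b}=\tp{(bx)}$ in the first case and $\tp{b}\,\tpinv{b}=1$ in the second.

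The case of $\underline{w}$ is the crux, and it is exactly where the Fourier transform enters. Since $\underline{w}$ interchanges $\X$ and $\Y$ up to sign, the required relation forces $T(\underline{w})$ to convert the multiplication action of $\uprho(x,0)$ into the translation action of $\uprho(y,0)$ and conversely; this is precisely the modulation--translation duality of $\widehat{\,\cdot\,}$. Concretely, I would check that $\widehat{\uprho(x,0)\upphi}(y)=\widehat{\upphi}(y-x)$, so that multiplication by $\uppsi(-2\tp{x}\,\cdot\,)$ becomes translation, and that $\widehat{\uprho(y,0)\upphi}(y_0)=\uppsi(-2\tp{y}y_0)\widehat{\upphi}(y_0)$, so that translation becomes multiplication, matching $\uprho(\underline{w}x,0)$ and $\uprho(\underline{w}y,0)$ on the other side. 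The main obstacle is bookkeeping rather than ideas: one must track the factor of $2$ appearing both in the multiplier $\uppsi(-2Q(x,y_0))=\uppsi(-2\tp{x}y_0)$ of Schr\"odinger's model and in the kernel $\uppsi(2\tp{u}y)$ of the Fourier transform, and keep the sign introduced by $\underline{w}$ on $\X$ consistent with Fourier inversion. Once these two identities are in hand the proposition follows.
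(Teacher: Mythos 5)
Your proposal is correct and matches the paper's proof in essentials: both verify the intertwining relation $T(g)\uprho=\uprho^g T(g)$ directly for each generator $\underline{x}(b)$, $\underline{h}(b)$, $\underline{w}$, using the Schr\"odinger formulas for $\uprho$ and dismissing the center as trivial. The only cosmetic difference is that the paper checks the relation on the combined elements $\uprho_0(x,y)=\uprho(x+y,0)$ in a single computation per generator, whereas you split into the $\X$- and $\Y$-generators separately; the cancellations you describe (the cross terms for $\underline{x}(b)$, the change of variables for $\underline{h}(b)$, and the modulation--translation duality for $\underline{w}$) are exactly those in the paper's calculations.
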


\begin{proof}
The action of $(0,t)$ under $\uprho$ will clearly be intertwined by these operators, so it suffices to verify the proposition for $\uprho_0(x,y)=\uprho(x+y,0)$.  First, the formula $T(g)\uprho=\uprho^g T(g)$ is verified for $g=\underline{x}(b)$:
\begin{align*}
\uprho_0^{\underline{x}(b)}(x,y)&\Big(T\big(\underline{x}(b)\big)\upphi\Big)(y_0)\\
&=\uprho_0(x,bx+y)\Big(T\big(\underline{x}(b)\big)\upphi\Big)(y_0)\\
&=\uppsi\big(-\tp{(x+by)}(y+2y_0)\big)\Big(T\big(\underline{x}(b)\big)\upphi\Big)(y+y_0)\\
&=\uppsi\big(-\tp{(x+by)}(y+2y_0)\big)\uppsi\big(\tp{(y+y_0)}b(y+y_0)\big)\upphi(y+y_0)\\
&=\uppsi(\tp{y_0}by_0)\uppsi\big(-\tp{x}(y+2y_0)\big)\upphi(y+y_0)\\
&=\uppsi(\tp{y_0}by_0)\big(\uprho_0(x,y)\upphi\big)(y_0)\\
&=T\big(\underline{x}(b)\big)\big(\uprho_0(x,y)\upphi\big)(y_0).
\end{align*}
Next, it is verified for $g=\underline{h}(b)$:
\begin{align*}
\uprho_0^{\underline{h}(b)}(x,y)&\Big(T\big(\underline{h}(b)\big)\upphi\Big)(y_0)\\
&=\uprho_0(bx,\tpinv{b}y)\Big(T\big(\underline{h}(b)\big)\upphi\Big)(y_0)\\
&=\uppsi\big(-\tp{x}(y+2\:\tp{b}y_0)\big)\Big(T\big(\underline{h}(b)\big)\upphi\Big)(y_0)\\
&=\uppsi\big(-\tp{x}(y+2\:\tp{b}y_0)\big)\upphi(y+\tp{b}y_0)\\
&=\big(\uprho_0(x,y)\upphi\big)(\tp{b}y_0)\\
&=T\big(\underline{h}(b)\big)\big(\uprho_0(x,y)\upphi\big)(y_0).
\end{align*}
Finally, it is verified for $g=\underline{w}$:
\begin{align*}
\uprho_0^w(x,y)&\big(T(\underline{w})\upphi\big)(y_0)\\
&=\uprho_0(y,-x)\widehat{\upphi}(y_0)\\
&=\uppsi\big(-\tp{y}(-x+2y_0)\big)\widehat{\upphi}(-x+y_0)\\
&=\uppsi\big(-\tp{y}(-x+2y_0)\big)\int_\Y\uppsi\big(2\:\tp{u}(-x+y_0)\big)\upphi(u)du\\
&=\uppsi\big(-\tp{y}(-x+2y_0)\big)\int_\Y\uppsi\big(2\:\tp{(v+y)}(-x+y_0)\upphi(v+y)dv\\
&=\int_\Y\uppsi(2\:\tp{v}y_0)\uppsi\big(-\tp{x}(y+2v)\big)\upphi(v+y)dv\\
&=\int_\Y\uppsi(2\:\tp{v}y_0)\big(\uprho_0(x,y)\upphi\big)(v)dv\\
&=T(\underline{w})\big(\uprho_0(x,y)\upphi\big)(y_0).
\end{align*}
\end{proof}

Considering $\Sp(\W)$ as a Chevalley group (Section \ref{chevalleygroup}), the elements $\underline{x}(b)$, $\underline{h}(b)$, and $\underline{w}$ may be expressed as products of the $\underline{x}_\upalpha(t)$.  These $\underline{x}_\upalpha(t)$ lift uniquely to $x_\upalpha(t)$ in $\ce\Sp(\W)$.  In this way, canonical lifts $x(b)$, $h(b)$ and $w$ are defined. The Schr\"odinger model of the Weil representation of $\ce\Sp(\W)$ on $S(\Y)$ is given by
\begin{align*}
x(b)\upphi(y)&=\upalpha_b\uppsi(\tp{y}by)\upphi(y),\\
h(b)\upphi(y)&=\upbeta_b\upphi(\tp{b}y),\\
w\upphi(y)&=\upgamma_1\widehat{\upphi}(y),
\end{align*}
for some constants $\upalpha_b,\upbeta_b,\upgamma_1$.  The computation of these constants is not relevant to the task at hand, so it will suffice to mention that $\upalpha_b=1$, that $\upbeta_b$ is a 4th root of 1, and that $\upgamma_1$ is an 8th root of 1, \cf \cite{kudla}, \cite{weil}.

\section{A Minimal Even Type for $\ce{\SL}_2(\Q_2)$}\label{SL_2}

Throughout the section, $\underline{G}$ is the linear group $\SL_2(\Q_2)$, $G$ is a two-fold central extension of $\underline{G}$, $\uppsi$ is an additive character of $\Q_2$ with conductor 1, $V$ is the space $S(\Q_2)$ of Schwarz functions, $(\upomega,V)$ is the Schr\"odinger model of the Weil representation associated to $\uppsi$, and $\upphi_m$ is the characteristic function of $2^m\Z_2$.  In this setting, $\widehat\upphi_m=2^{-m}\upphi_{-m}$.

As a Chevalley group, $\underline{G}$ is generated by the elements 
$$\underline{x}(t)=\mtwo{1}{t}{}{1}\;\text{ and }\;\;
\underline{y}(t)=\mtwo{1}{}{t}{1},$$
where $t\in\Q_2$.  For $t\in\Q_2^\times,$ the elements $\underline{w}(t)$ and $\underline{h}(t)$ are given by
$$\underline{w}(t)=\underline{x}(t)\underline{y}(-1/t)\underline{x}(t)=\mtwo{}{t}{-t^{-1}}{},$$
$$\underline{h}(t)=\underline{w}(t)\underline{w}(-1)=\mtwo{t}{}{}{t^{-1}}.$$

Let $x(t)$ and $y(t)$ be the unique lifts of $\underline{x}(t)$ and $\underline{y}(t)$; these elements generate the central extension $G$ of $\underline{G}$.  For $t\in\Q_2^\times$, define the elements $w(t)$ and $h(t)$ of $G$ by
$$w(t)=x(t)y(-1/t)x(t),$$
$$h(t)=w(t)w(-1).$$
These are the canonical lifts of $\underline{w}(t)$ and $\underline{h}(t)$, respectively.

The elements $w_0=w(1/2)$ and $w_1=w(1)$ form a set of representatives of the generators $\{s_0,s_1\}$ of the affine Weyl group $W^\text{aff}$.  A full set of representatives of $W^\text{aff}$ in $G$ is $\big\{w(2^n),h(2^n): n\in\Z\big\}$, and, abusing notation, these will often be referred to as elements of the affine Weyl group itself.

The Schr\"odinger model of the Weil representation of $G$ on $V$ is given by
\begin{align*}
x(t)\upphi(y)&=\uppsi(ty^2)\upphi(y)\\
h(t)\upphi(y)&=\upbeta_t\upphi(ty)\\
w_1\upphi(y)&=\upgamma_1\widehat{\upphi}(y).
\end{align*}

\subsection{Minimal Type}

Let $\underline{I}$ be the Iwahori subgroup 
$$\underline{I}=\left\{\mtwo{a}{b}{2c}{d}\in\SL_2(\Z_2):a,b,c,d\in\Z_2\right\}$$
and $\underline{J}$ the normal subgroup of $\underline{I}$ given by
$$\underline{J}=\left\{\mtwo{a}{2b}{2c}{d}\in\SL_2(\Z_2):a,b,c,d\in\Z_2\right\}.$$
Consider the projection map $\uprho:\SL_2(\Z_2)\to\SL_2(\F_2)$.  The image of $\underline{I}$ under $\uprho$ is the Borel subgroup 
$$B_2(\F_2)=\left\{\mtwo{1}{t}{}{1}:t\in\F_2\right\}\cong\F_2,$$
and the image of $\underline{J}$ is the trivial subgroup.  Note that
$$[\SL_2(\Z_2):\underline{J}]=|\SL_2(\F_2)|=6.$$

Let $I$ and $J$ be the full inverse images in the central extension $G$ of $\underline{I}$ and $\underline{J}$, respectively, so that $I$ is generated by
$$\{h(t):t\in\Z_2^\times\}\cup\{x(t):t\in\Z_2\}\cup\{y(t):t\in2\Z_2\}$$
and $J$ is generated by
$$\{h(t):t\in\Z_2^\times\}\cup\{x(t),y(t):t\in2\Z_2\}.$$
From the linear group computations, the quotients $\rQ{I}{J}$ and $\lQ{J}{I}$ are each isomorphic to $B_2(\F_2)$ via
$$x(1)J\longleftrightarrow Jx(1)\longleftrightarrow \mtwo{1}{1}{}{1}.$$

Consider the even function $\upphi_0$ in $V$, and recall that $\widehat\upphi_0=\upphi_0$, so that $w_1\upphi_0\in\C\upphi_0$.  The elements of $G$ that stabilize $\C\upphi_0$ will be determined.
\begin{enumerate}[ \mbox{ } 1.]
\item $x(t)\upphi_0(y)=\uppsi(ty^2)\upphi_0(y)$; the value of $\uppsi(ty^2)$ is independent of $y\in\Z_2$ if and only if $t\in2\Z_2$.  So,
$$x(t)\upphi_0\in\C\upphi_0\text{ if and only if }t\in2\Z_2.$$
\item $h(t)\upphi_0(y)=\upbeta_t\upphi_0(ty)$; the equality $\upphi_0(y)=\upphi_0(ty)$ for all $y\in\Z_2$ is possible if and only if $t\in\Z_2^\times$.  Thus,
$$h(t)\upphi_0\in\C\upphi_0\text{ if and only if }t\in\Z_2^\times.$$
\item Since $w(t)=h(t)w_1$,
$$w(t)\upphi_0\in\C\upphi_0\text{ if and only if }t\in\Z_2^\times.$$
\item Since $y(t)=w_1x(-t)w_1^{-1}$,
$$y(t)\upphi_0\in\C\upphi_0\text{ if and only if }t\in2\Z_2.$$
\end{enumerate}
In particular, the stabilizer of $\C\upphi_0$ is exactly the group $J\cup Jw_1$; that is, $J$ is the intersection of $I$ and the stabilizer of $\C\upphi_0$.

Since $J$ acts on $\C\upphi_0$, denote by $\bar\upchi:J\to\C^\times$ the character satisfying $\upomega(j)\upphi_0=\bar\upchi(j)\upphi_0$ for $j\in J$, which acts on the nontrivial subspace
$$V^{J,\bar\upchi}=\big\{\upphi\in V:\upomega(j)\upphi=\bar\upchi(j)\upphi\text{ for }j\in J\big\}\supset\C\upphi_0.$$

\begin{prop}\label{chibar}
The character $\bar\upchi$ satisfies the following properties.
\begin{enumerate}[ \mbox{ } 1.]
\item $\bar\upchi\big(x(t)\big)=\bar\upchi\big(y(t)\big)=1$ for $t\in2\Z_2$.
\item $\bar\upchi\big(x(-1)y(t)x(1)\big)=\bar\upchi\big(x(1)y(t)x(-1)\big)=\uppsi(-t/4)$ for $t\in2\Z_2$.
\item $V^{J,\bar\upchi}=\C \upphi_0$.
\end{enumerate}
\end{prop}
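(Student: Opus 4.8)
The plan is to compute the action of $\bar\upchi$ on the given generators of $J$ directly from the Schrödinger model formulas, and then to prove the harder part (3) by a weight-space argument. First I would establish (1): since $x(t)\upphi_0(y)=\uppsi(ty^2)\upphi_0(y)$ and $\upphi_0$ is supported on $\Z_2$, for $t\in2\Z_2$ the character $\uppsi$ (of conductor $1$) kills $ty^2\in2\Z_2$, giving $\bar\upchi(x(t))=1$; the claim for $y(t)$ follows from the conjugation relation $y(t)=w_1x(-t)w_1^{-1}$ together with $w_1\upphi_0=\upgamma_1\widehat\upphi_0=\upgamma_1\upphi_0\in\C\upphi_0$, so conjugation by $w_1$ does not change the scalar.

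For (2), I would simply multiply the three operators. Using $x(\pm1)\upphi_0(y)=\uppsi(\pm y^2)\upphi_0(y)$ and $y(t)=w_1x(-t)w_1^{-1}$, I would apply $x(\mp1)$, then $y(t)$, then $x(\pm1)$ to $\upphi_0$, carrying out the Fourier transforms coming from $w_1$. The key computation is $w_1 x(-t) w_1^{-1}\upphi_0$: conjugating the multiplication operator $\uppsi(-ty^2)$ by the Fourier transform turns it into a translation-type operator, and tracking the resulting Gaussian integral over $\Z_2$ produces the phase $\uppsi(-t/4)$. The two products are complex conjugate/symmetric expressions, so once one is computed the other follows by the symmetry $t\mapsto$ the analogous substitution. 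I expect part (2) to require the most careful bookkeeping of the $8$th-root-of-unity constant $\upgamma_1$ and the Gaussian sum, but these constants should cancel in the final scalar since $w_1$ appears an even number of times.

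The main obstacle is part (3), showing $V^{J,\bar\upchi}=\C\upphi_0$ rather than something larger. The strategy is to use the operators in $J$ that act diagonally to pin down the support and the local constancy of any $\upphi\in V^{J,\bar\upchi}$. Concretely, the condition $\upomega(x(t))\upphi=\bar\upchi(x(t))\upphi=\upphi$ for all $t\in2\Z_2$ forces $\uppsi(ty^2)=1$ wherever $\upphi(y)\neq0$, which by the conductor-$1$ property confines the support of $\upphi$ to $\Z_2$. Dually, the condition coming from the $y(2\Z_2)$ generators — equivalently the $x(2\Z_2)$ condition transported through $w_1$ via the Fourier transform — forces $\widehat\upphi$ to be supported on $\Z_2$ as well, i.e.\ forces $\upphi$ to be constant on $\Z_2$-cosets. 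Combining these via the Fourier facts in Section \ref{fouriertransform} (with $c=\updelta=1$, so $\widehat\upphi_0=\upphi_0$), a function supported on $\Z_2$ and constant on $\Z_2$ is a scalar multiple of $\upphi_0$, giving the reverse inclusion $V^{J,\bar\upchi}\subseteq\C\upphi_0$; the inclusion $\supseteq$ is already known. I would present this as a short two-sided support argument, and the delicate point to get right is matching the phase $\bar\upchi$ exactly on the nondiagonal generators so that no additional $\upphi$ slips into the weight space.
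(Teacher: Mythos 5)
Your overall route is the paper's: parts (1) and (2) by direct computation in the Schr\"odinger model, and part (3) by the two-sided support argument. In fact your proof of (3) is exactly the paper's proof: it takes $\upphi$ supported on $2^m\Z_2$ with $\widehat\upphi$ supported on $2^{-n}\Z_2$, uses that $x(2)$ and $y(2)$ act trivially on $V^{J,\bar\upchi}$ (your $\uppsi(ty^2)=1$ condition on both supports, via conductor $1$), and concludes $0\leq m\leq n\leq 0$, i.e.\ $\upphi\in\C\upphi_0$.

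One step of your sketch for (2) would fail if taken literally: the two triple products have \emph{equal} values, not complex-conjugate ones. For $t\in2\Z_2\smallsetminus4\Z_2$ the value $\uppsi(-t/4)$ is a nontrivial fourth root of unity (compare the use of $\uppsi(-1/2)$ in Lemma \ref{SL_2-support-H1}), so ``the other follows by conjugation'' would produce $\uppsi(t/4)\neq\uppsi(-t/4)$. The paper gets the equality for free from part (1) and multiplicativity of $\bar\upchi$: since $x(2)\big(x(-1)y(t)x(1)\big)x(-2)=x(1)y(t)x(-1)$ and $\bar\upchi\big(x(\pm2)\big)=1$, the two values agree, and only one product need be computed. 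A second, smaller imprecision: the decisive Gaussian integral is over the shifted coset $1/2+\Z_2$, not over $\Z_2$. One computes $x(-1)\upphi_0=-\upphi_0+2\upphi_1$, whose image under $w_1^{-1}$ is $\upgamma_1^{-1}$ times $-\upphi_0+\upphi_{-1}$, the characteristic function of $1/2+\Z_2$; the phase $\uppsi(-t/4)$ arises precisely because $u^2\in1/4+\Z_2$ on that coset, whereas $\int_{\Z_2}\uppsi(-tu^2)\,du=1$ for $t\in2\Z_2$ and would yield no phase at all. With these two corrections your computation matches the paper's, including the cancellation of $\upgamma_1$ with $\upgamma_1^{-1}$ that you anticipated.
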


\begin{proof}
The first part is clear from the determination of $J$.
Noting that 
\begin{align*}
\bar\upchi\big(x(-1)y(t)x(1)\big)
&=\bar\upchi\big(x(2)\big)\bar\upchi\big(x(-1)y(t)x(1)\big)\bar\upchi\big(x(-2)\big)\\
&=\bar\upchi\big(x(1)y(t)x(-1)\big),
\end{align*}
in order to prove the second part, it suffices to compute the action of 
$$x(1)y(t)x(-1)=x(1)w_1x(-t)w_1^{-1}x(-1)$$
on $\upphi_0$.  The action of $x(-1)$ on $\upphi_0$ is given by
\begin{align*}
x(-1)\upphi_0(y)
&=\uppsi(-y^2)\upphi_0(y)\\
&=\left\{\!\!\!\begin{array}{rl}
1&\text{ if }y\in2\Z_2\\
-1&\text{ if }y\in\Z_2\smallsetminus2\Z_2\\
0&\text{ otherwise}
\end{array}\right.\\
&=\big(-\upphi_0+2\upphi_1\big)(y).
\end{align*}
Therefore,
\begin{align*}
x(1)w_1x(-t)w_1^{-1}&\big(x(-1)\upphi_0\big)(y)\\
&=x(1)w_1x(-t)w_1^{-1}\big(-\upphi_0+2\upphi_1\big)(y)\\
&=\upgamma_1^{-1}x(1)w_1\big(\uppsi(-ty^2)(-\upphi_0+\upphi_{-1})\big)(y)\\
&=x(1)\int_{\Q_2}\uppsi(2uy)\uppsi(-tu^2)(-\upphi_0+\upphi_{-1})(u)du\\
&=\uppsi(y^2)\int_{1/2+\Z_2}\uppsi(2uy)\uppsi(-tu^2)du,
\end{align*}
the last equality following from the fact that $-\upphi_0+\upphi_{-1}$ is equal to the characteristic function of $1/2+\Z_2$.  Evaluating this expression at $y=0$ gives
$$x(1)y(t)x(-1)\upphi_0(0)=\int_{1/2+\Z_2}\uppsi(-tu^2)du.$$
If $u\in1/2+\Z_2$, then $u^2\in1/4+\Z_2$, so, for $t\in2\Z_2$, $\uppsi(-tu^2)=\uppsi(-t/4)$.  Since $\vol(\Z_2)=1$, one has
$$x(1)y(t)x(-1)\upphi_0(0)=\uppsi(-t/4).$$

To prove the third part, it is enough to show that $V^{J,\bar\upchi}\subset\C\upphi_0$.  Let $\upphi$ be an arbitrary element of $V^{J,\bar\upchi}$. There exist $m\leq n$ such that $\upphi $ is supported on $2^m\Z_2$ and $\widehat{\upphi}$ is supported on $2^{-n}\Z_2$.  Since $x(2)$ and $y(2)$ act trivially on $V^{J,\bar\upchi}$, it must be that $\uppsi(2y^2)=1$ for all $y\in2^m\Z_2$ and all $y\in2^{-n}\Z_2$.  Therefore, $0\leq m\leq n\leq0$, so $\upphi\in\C\upphi_0$.
\end{proof}

\subsection{Hecke Algebra}

Recalling Section \ref{heckealgebras} and its notation, the Hecke algebra for this type is $\HH=\HH(\biinv{G}{J};\upchi)$.  An element $f$ of $\HH$ is determined by its value on a set of representatives of $\dQ{J}{G}{J}$.  By \cite{iwahori-matsumoto}, $\underline{G}=\underline{I}W^\text{aff}\underline{I}$, so the double cosets $\dQ{I}{G}{I}$ are parametrized by the affine Weyl group.  In this way, a typical representative of a $J$ double coset is of the form $Jx_1wx_2J$, where $x_1$ and $x_2$ are either $x(1)$ or trivial, and $w$ is of the form $w(2^n)$ or $h(2^n)$.  Some of these representatives are redundant and will be eliminated.  

Recalling relations (R3) and (R4) from Section \ref{relations}, one has
$$\begin{array}{rccccl}
Jh(2^n)x(1)J&=&Jx(2^{2n})h(2^n)J&=&Jh(2^n)J&\text{ for }n>0,\\
Jx(1)h(2^n)J&=&Jh(2^n)x(2^{-2n})J&=&Jh(2^n)J&\text{ for }n<0,\\
Jx(1)w(2^n)J&=&Jw(2^n)y(-2^{-2n})J&=&Jw(2^n)J&\text{ for }n<0,\\
Jw(2^n)x(1)J&=&Jy(-2^{-2n})w(2^n)J&=&Jw(2^n)J&\text{ for }n<0,
\end{array}$$
and so,
$$Jx(1)h(2^n)x(1)J=
\begin{cases}
Jx(1)h(2^n)J&\text{ if }n>0,\\
Jh(2^n)x(1)J&\text{ if }n<0,\\
Jh(2^n)J&\text{ if }n=0.
\end{cases}$$
Therefore, a complete set of double coset representatives for $\dQ{J}{G}{J}$ is
$$\begin{array}{rl}
w(2^n)\;\;:&n\in\Z,\\
h(2^n)\;\;:&n\in\Z,\\
w(2^n)x(1)\;\;:&n\geq0,\\
x(1)w(2^n)\;\;:&n\geq0,\\
x(1)w(2^n)x(1)\;\;:&n\geq0,\\
x(1)h(2^n)\;\;:&n\geq0,\\
h(2^n)x(1)\;\;:&n<0.
\end{array}$$

\begin{lemma}\label{SL_2-support-H1}
The support of $\HH$ is contained in $JW^{\emph{aff}}J$.
\end{lemma}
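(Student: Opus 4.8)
The plan is to exploit the bi-equivariance of Hecke functions through an intertwining obstruction. The basic tool is the following: \emph{if $g\in G$ and some $h\in J$ satisfies $g^{-1}hg\in J$ but $\upchi(h)\neq\upchi(g^{-1}hg)$ (equivalently $\bar\upchi(h)\neq\bar\upchi(g^{-1}hg)$), then $f(g)=0$ for every $f\in\HH$.} Indeed, taking $h_1=h$ and $h_2=g^{-1}h^{-1}g$, both in $J$, one has $h_1gh_2=g$, so the relation defining $\HH$ gives $f(g)=\upchi(h)f(g)\upchi(g^{-1}h^{-1}g)$; since $\upchi(h)\upchi(g^{-1}h^{-1}g)=\upchi(h)\upchi(g^{-1}hg)^{-1}\neq1$ under the hypothesis, $f(g)=0$. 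Hence it suffices to supply one such witness $h$ for each double-coset representative on the reduced list that is not already of the form $w(2^n)$ or $h(2^n)$.

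There are five such families: $x(1)h(2^n)$, $x(1)w(2^n)$, and $x(1)w(2^n)x(1)$ for $n\geq0$, together with $w(2^n)x(1)$ for $n\geq0$ and $h(2^n)x(1)$ for $n<0$. The strategy in each case is to choose $h$ so that the conjugate $g^{-1}hg$ telescopes to a \emph{single} root-group element, whose $\bar\upchi$-value is then read directly from Proposition \ref{chibar}. For the three families with a leading $x(1)$ I would take the common witness $h=x(1)y(2)x(-1)$, which lies in $J$ and has $\bar\upchi(h)=\uppsi(-1/2)\neq1$ by Proposition \ref{chibar}(2). Since $x(-1)\big(x(1)y(2)x(-1)\big)x(1)=y(2)$, the inner factors cancel and relations (R3), (R4) reduce $g^{-1}hg$ to $y(2^{2n+1})$ for $g=x(1)h(2^n)$ and to $x(-2^{2n+1})$ for $g=x(1)w(2^n)$ and $g=x(1)w(2^n)x(1)$; each lies in $J$ with $\bar\upchi=1$ by Proposition \ref{chibar}(1). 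Dually, for $g=w(2^n)x(1)$ I take $h=x(-2^{2n+1})=w(2^n)y(2)w(2^n)^{-1}\in J$ with $\bar\upchi(h)=1$, so that $g^{-1}hg=x(-1)y(2)x(1)$ has $\bar\upchi=\uppsi(-1/2)\neq1$; and for $g=h(2^n)x(1)$ with $n<0$ I take $h=y(2^{-2n+1})\in J$ with $\bar\upchi(h)=1$, so that again $g^{-1}hg=x(-1)y(2)x(1)$. In all five cases $\bar\upchi(h)\neq\bar\upchi(g^{-1}hg)$, whence $f(g)=0$ and the support lies in $JW^\text{aff}J$.

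The remaining work is the mechanical verification that each conjugate really is the claimed single root-group element and that both $h$ and $g^{-1}hg$ genuinely lie in $J$; here the valuation bookkeeping is precisely what forces the ranges $n\geq0$ (resp. $n<0$), since one needs $2^{\pm2n+1}\in2\Z_2$. The point deserving genuine care, and the main obstacle, is that all of this must be valid in the two-fold cover $G$ rather than merely in $\underline{G}$: I would emphasize that the conjugation identities used are instances of the \emph{lifted} relations (R3) and (R4), which by the discussion of central extensions hold exactly in $G$ with no spurious central sign, because the $x_\upalpha(t)$ lift uniquely; and that the one nontrivial value $\bar\upchi(x(1)y(2)x(-1))=\uppsi(-1/2)$ genuinely comes from the Weil-representation computation on $\upphi_0$ recorded in Proposition \ref{chibar}, not from the values of $\bar\upchi$ on the generators of $J$.
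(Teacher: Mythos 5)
Your proof is correct and takes essentially the same approach as the paper: your conjugation criterion $f(g)=\upchi(h)f(g)\upchi(g^{-1}h^{-1}g)$ is exactly the mechanism behind the paper's inline chains of equalities, and your case-by-case witnesses (the element $x(\pm1)y(2)x(\mp1)$ with value $\uppsi(-1/2)$, moved across $w(2^n)$ or $h(2^n)$ via the lifted relations (R3)--(R4), producing $y(2^{2n+1})$, $x(-2^{2n+1})$, or $y(2^{-2n+1})$) match the paper's computations for the same five families of reduced double-coset representatives. The only difference is presentational: you abstract the obstruction into a stand-alone criterion and apply it uniformly, where the paper repeats the identical manipulation for each family.
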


\begin{proof}
Let $f$ be an element of $\HH$.  Recalling relations (R3) and (R4) and Proposition \ref{chibar}, if $n\geq0$,  then
\begin{align*}
f\big(w(2^n)x(1)\big)
&=\upchi\big(x(2^{2n+1})\big)f\big(w(2^n)x(1)\big)\\
&=f\big(x(2^{2n+1})w(2^n)x(1)\big)\\
&=f\big(w(2^n)y(2)x(1)\big)\\
&=f\big(w(2^n)x(1)\big)\upchi\big(x(-1)y(2)x(1)\big)\\
&=\uppsi(-1/2)f\big(w(2^n)x(1)\big),
\end{align*}
\begin{align*}
f\big(x(1)w(2^n)\big)
&=f\big(x(1)w(2^n)x(-2^{2n+1})\big)\\
&=\upchi\big(x(1)y(2)x(-1)\big)f\big(x(1)w(2^n)\big)\\
&=\uppsi(-1/2)f\big(x(1)w(2^n)\big),
\end{align*}
\begin{align*}
f\big(x(1)w(2^n)x(1)\big)
&=f\big(x(1)w(2^n)x(1)x(-2^{2n+1})\big)\\
&=\upchi\big(x(1)y(2)x(-1)\big)f\big(x(1)w(2^n)x(1)\big)\\
&=\uppsi(-1/2)f\big(x(1)w(2^n)x(1)\big),
\end{align*}
\begin{align*}
f\big(x(1)h(2^n)\big)
&=f\big(x(1)h(2^n)y(2^{2n+1})\big)\\
&=\upchi\big(x(1)y(2)x(-1)\big)f\big(x(1)h(2^n)\big)\\
&=\uppsi(-1/2)f\big(x(1)h(2^n)\big),
\end{align*}
and, if $n<0$, then
\begin{align*}
f\big(h(2^n)x(1)\big)
&=f\big(y(2^{-2n+1})h(2^n)x(1)\big)\\
&=f\big(h(2^n)x(1)\big)\upchi\big(x(-1)y(2)x(1)\big)\\
&=\uppsi(-1/2)f\big(h(2^n)x(1)\big).
\end{align*}
Since $\uppsi(-1/2)$ is a nontrivial 4th root of 1, $f$ can only be nonzero on the double cosets $Jh(2^n)J$ and $Jw(2^n)J$.
\end{proof}

In order to determine the structure of $\HH$, two subalgebras will be introduced.  Define $J_1$ to be the open compact group
$$J_1=J\cup Jw_1J=J\cup Jw_1;$$
that is, $J_1$ is the stabilizer of $\C\upphi_0$.

Define $\HH_1$ to be the Hecke subalgebra $\HH(\biinv{J_1}{J};\upchi)$ of functions in $\HH$ supported on $J_1$.  Since $J_1$ intersects the affine Weyl group at $1$ and $w_1$, this is either a one- or two-dimensional subalgebra.  Following the notation from Section \ref{heckealgebras}, let $V_1=J_1\cdot\C\upphi_0$ and $U_1=\ind_J^{J_1}\upchi$ of dimensions 
$$\dim(V_1)=\dim(\C\upphi_0)=1\;\;\text{ and }\;\;\dim(U_1)=[J_1:J]=2.$$  
By Proposition \ref{T_x}, $\HH_1$ is the two-dimensional algebra generated by $\dot\upchi$ and $T_1$, where $T_1$ is supported on $Jw_1J$ and is normalized to act on $V_1$ by
$$\uplambda=\frac{2-1}{1}=1.$$
In other words, $T_1$ satisfies the quadratic relation is $T_1^2=1$.

Define $J_0$ to be the full inverse image of
$$\underline{J}_0=\left\{\mtwo{a}{b/2}{2c}{d}\in\SL_2(\Q_2):a,b,c,d\in\Z_2\right\}.$$
Note that $\underline{J}_0$ and $\SL_2(\Z_2)$ are the two distinct maximal compact subgroups of $\SL_2(\Q_2)$ that contain the Iwahori subgroup.  The group $J_0$ is generated by $x(1/2)$ and $y(2)$ or, equivalently, by $x(1/2)$ and $w_0=w(1/2)$.  Since
\begin{align*}
x(1/2)\upphi_0(y)&=\uppsi(1/2)\upphi_0(y)+\upphi_1(y),\\
x(1/2)\upphi_1(y)&=\upphi_1(y),\\
w(1/2)\upphi_0(y)&\in\C\widehat\upphi_0(y/2)=\C\upphi_0(y/2)=\C\upphi_1(y),\\
w(1/2)\upphi_1(y)&\in\C\widehat\upphi_1(y/2)=\C\upphi_{-1}(y/2)=\C\upphi_0(y),
\end{align*}
the space $V_0=J_0\cdot\C\upphi_0$ is the two-dimensional space $\C\upphi_0\oplus\C\upphi_1$.  Let $\HH_0$ be the subalgebra $\HH(\biinv{J_0}{J};\upchi)$.  Again, as $J_0$ intersects the affine Weyl group only at $1$ and $w_0$, this is a one- or two-dimensional subalgebra.   Let $U_0$ be the induced representation $\ind_J^{J_0}\upchi$.  The dimension of $U_0$ is
$$\dim(U_0)=[J_0:J]=\frac{\vol(\underline{J}_0)}{\vol(\underline{I})}\cdot\frac{\vol(\underline{I})}{\vol(\underline{J})}=3\cdot2=6.$$
As a remark, this may also be computed by observing that the volumes of $\underline{J}_0$ and $\SL_2(\Z_2)$ are equal since they are conjugate, so the index of $\underline{J}$ in $\underline{J}_0$ is 6.  By Proposition \ref{T_x}, $\HH_0$ is the two-dimensional algebra generated by $\dot\upchi$ and $T_0$, where $T_0$ is supported on $Jw_0J$ and is normalized to act on $V_0$ by
$$\uplambda=\frac{6-2}{2}=2.$$
Hence, $T_0$ satisfies the quadratic relation $(T_0-2)(T_0+1)=0$.  

Next, for each $w\in W^\text{aff}$, a nontrivial element $T_w$ supported on $JwJ$ will be defined.  Let $w=w_{i_1}\cdots w_{i_m}$ be a minimal expression for $w$ in terms of the simple generators $w_0$, $w_1$.  This minimal expression is unique since there are no braid relations in $W^\text{aff}$.  Define $T_w=T_{i_1}\cdots T_{i_m}$.  Then,
$$\op{supp}(T_w)=Jw_{i_1}J\cdots Jw_{i_m}J\subset Iw_{i_1}I\cdots Iw_{i_m}I=IwI,$$
the last equality following from the minimality of the expression for $w$.  Since $T_w$ can only be supported on $JW^\text{aff}J$, the support of $T_w$ is exactly $JwJ$.  As $T_0$ and $T_1$ act on $\C\upphi_0$ by 2 and 1, respectively, $T_w$ is nontrivial.

Considering the support of each $T_w$, there are no braid relations between $T_0$ and $T_1$ since there are no braid relations between $w_0$ and $w_1$ in $W^\text{aff}$.

The result of this discussion is the following theorem.

\begin{theorem}
The support of $\HH$ is $JW^\emph{aff}J$ and $\{T_w:w\in W^\emph{aff}\}$ forms a basis for $\HH$ as a vector space.  As an abstract $\C$-algebra, $\HH$ is generated by $T_0$ and $T_1$, subject only to the quadratic relations
$$T_1^2=1\;\;\text{ and }\;\;(T_0-2)(T_0+1)=0.$$
In particular, $\HH$ is isomorphic to the Iwahori-Hecke algebra of $\op{PGL}_2(\Q_2)$.
\end{theorem}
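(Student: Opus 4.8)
The plan is to treat this as an assembly of the three facts already established: the containment $\op{supp}(\HH)\subseteq JW^{\text{aff}}J$ from Lemma \ref{SL_2-support-H1}, the explicit nonzero elements $T_w$ supported exactly on $JwJ$, and the quadratic relations satisfied by $T_0,T_1$. I would first pin down the basis, then upgrade the quadratic relations to a full presentation by a dimension count, and finally match that presentation with the standard one for $\op{PGL}_2(\Q_2)$.

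For the basis: since $J\subseteq I$ and, by Iwahori--Matsumoto, the cosets $\underline{I}\bar w\underline{I}$ are pairwise disjoint as $\bar w$ ranges over $W^{\text{aff}}$, the double cosets $JwJ$ are pairwise distinct, so the $T_w$ (having disjoint supports) are linearly independent. On a single double coset $JwJ$ any $f\in\HH$ satisfies $f(j_1wj_2)=\upchi(j_1)f(w)\upchi(j_2)$, so it is determined by $f(w)$; hence the space of functions in $\HH$ supported on $JwJ$ is at most one-dimensional, and it is exactly one-dimensional precisely because $T_w$ is nonzero. Combined with $\op{supp}(\HH)\subseteq JW^{\text{aff}}J=\bigsqcup_w JwJ$, this shows every $f\in\HH$ is a finite $\C$-linear combination of the $T_w$. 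Thus $\{T_w:w\in W^{\text{aff}}\}$ is a basis and $\op{supp}(\HH)$ is exactly $JW^{\text{aff}}J$.

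Next I would establish the abstract presentation. Let $\A$ be the $\C$-algebra on generators $X_0,X_1$ subject only to $(X_0-2)(X_0+1)=0$ and $X_1^2=1$. Using $X_0^2=X_0+2$ and $X_1^2=1$ to collapse repeated letters, every monomial rewrites as a scalar combination of alternating words in $X_0,X_1$; since $W^{\text{aff}}$ is the infinite dihedral group, these alternating words are indexed by $W^{\text{aff}}$, so they span $\A$ and $\dim_\C\A\le|W^{\text{aff}}|$. The assignment $X_i\mapsto T_i$ defines an algebra surjection $\A\to\HH$ (well-defined by the quadratic relations, surjective because $T_0,T_1$ generate $\HH$), carrying the alternating word for $w$ to $T_w=T_{i_1}\cdots T_{i_m}$. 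Because the $T_w$ are linearly independent, this surjection sends a spanning set of size $\le|W^{\text{aff}}|$ onto a basis, forcing it to be an isomorphism; hence $\HH$ is presented exactly by $T_0,T_1$ and the two quadratic relations, with no further braid relations. Finally, the extended affine Weyl group of the adjoint group $\op{PGL}_2$ is $W^{\text{aff}}\rtimes\langle\uptau\rangle$ with $\uptau^2=1$ and $\uptau s_1\uptau=s_0$, which as an abstract group is again infinite dihedral on the generators $s_1,\uptau$; correspondingly the Iwahori--Hecke algebra $\HH'$ of $\op{PGL}_2(\Q_2)$ is generated by $t_1,\uptau$ with $(t_1-2)(t_1+1)=0$ (residue field $\F_2$, so $q=2$), $\uptau^2=1$, and no braid relation since $\uptau t_1$ represents an infinite-order element. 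This is the same presentation, so $t_1\mapsto T_0$, $\uptau\mapsto T_1$ is the asserted isomorphism.

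The main obstacle is the completeness of relations in the second step: one must rule out any hidden relation among $T_0,T_1$ beyond the two quadratic ones. Everything turns on matching $\dim_\C\HH$, known from the basis $\{T_w\}$ (hence from the containment lemma together with the nonvanishing of each $T_w$), against the a priori bound $\dim_\C\A\le|W^{\text{aff}}|$ coming from the reduced-word spanning set of $\A$; it is exactly this numerical coincidence that forces $\A\to\HH$ to be injective and thereby yields the presentation.
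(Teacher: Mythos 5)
Your proposal is correct and, at bottom, follows the same route as the paper: there the theorem is explicitly a summary (``the result of this discussion is the following theorem''), with Lemma \ref{SL_2-support-H1}, Proposition \ref{T_x} (producing $T_0$, $T_1$ and their quadratic relations), and the construction of $T_w$ via the unique reduced words of the infinite dihedral group $W^{\text{aff}}$ all established beforehand, so that the paper's proof body consists solely of the $\op{PGL}_2(\Q_2)$ identification, obtained by citing Loke--Savin for the presentation of that Iwahori--Hecke algebra. What you do differently is to make explicit two steps the paper leaves implicit: (i) you prove spanning by observing that any $f\in\HH$ supported on a single coset $JwJ$ is determined by $f(w)$, so each double-coset component is at most one-dimensional and the nonvanishing of $T_w$ makes it exactly one-dimensional; and (ii) you prove ``subject only to the quadratic relations'' honestly, by mapping the abstractly presented algebra $\A$ onto $\HH$, using the rewriting via $X_0^2=X_0+2$, $X_1^2=1$ to see that alternating words indexed by $W^{\text{aff}}$ span $\A$, and concluding from the linear independence of the $T_w$ --- whereas the paper only remarks that ``there are no braid relations between $T_0$ and $T_1$.'' You also derive the $\op{PGL}_2$ presentation from the extended affine Weyl group (infinite dihedral on $s_1$ and the length-zero element $\uptau$) instead of citing it; this is consistent with the paper, whose generator $t_0$, the characteristic function of $I'\mtwo{0}{1}{2}{0}I'$, is precisely your $\uptau$, and both give the matching $T_0\leftrightarrow t_1$, $T_1\leftrightarrow t_0$ (your $\uptau$). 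One small caution: your closing gloss attributing injectivity of $\A\to\HH$ to a ``numerical coincidence'' of dimensions is not literally valid for infinite-dimensional algebras, since a surjection between spaces of equal infinite dimension need not be injective; but the argument you actually give in the body --- the surjection carries a spanning family of $\A$ onto the linearly independent family $\{T_w\}$, so any kernel element expands over the alternating words and its coefficients die against the independence of the $T_w$ --- is correct, so nothing is broken.
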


\begin{proof}
The only remark to be made concerns the Hecke algebra of $\op{PGL}_2(\Q_2)$.  Let $G'=\op{PGL}_2(\Q_2)$, $I'$ its Iwahori subgroup and $\HH'$ its Iwahori-Hecke algebra.  Denote by $t_0$ and $t_1$ the characterstic functions of
$$I'\mtwo{0}{1}{2}{0}I'\;\;\text{ and }\;\;I'\mtwo{0}{1}{1}{0}I',$$
respectively.  They generate $\HH'$ as an abstract $\C$-algebra, have no braid relations between them, and satisfy the quadratic relations
$$t_0^2=1\;\;\text{ and }\;\; (t_1-2)(t_1+1)=0,$$
\cf \cite{loke-savin}.  Hence, the isomorphism between $\HH$ and $\HH'$ is given by
$$T_0\leftrightarrow t_1\;\;\text{ and }\;\; T_1\leftrightarrow t_0.$$
\end{proof}

\section{A Minimal Even Type for $\ce\Sp_{2\lowercase{n}}(\Q_2)$}

Throughout this section, $\W$ is a $2n$-dimensional symplectic vector space over $\Q_2$, $\X+\Y$ is a complete polarization of $\W$, $\underline{G}$ is the symplectic group $\Sp(\W)$, $G$ is a two-fold central extension of $\underline{G}$, $\uppsi$ is an additive character of $\Q_2$ of conductor 1, $V$ is the vector space $S(\Y)$, and $(\upomega,V)$ is the Schr\"odinger model of the Weil representation with respect to $\uppsi$.  Recall that the elements $x(b)$, $h(b)$, and $w$ in $G$ are lifts of
$$\mtwo{1}{b}{0}{1},\;\;\;\mtwo{b}{0}{0}{\tpinv{b}},\;\text{ and }\;\mtwo{0}{1}{-1}{0},\;\text{ respectively,}$$
and that the Weil representation of $G$ on $V$ is given by
\begin{align*}
x(b)\upphi(y)&=\uppsi(\tp{y}by)\upphi(y)\\
h(b)\upphi(y)&=\upbeta_b\upphi(\tp{b}y)\\
w\upphi(y)&=\upgamma_1\widehat{\upphi}(y).
\end{align*}

As a central extension of a Chevalley group, $G$ is generated by $x_\upalpha(t)$ for $\upalpha\in\Phi$ and $t\in\Q_2$, and is subject to Steinberg's relations for covering groups, \cf Section \ref{relations}.  The generators $x_\upalpha(t)$ may be expressed in terms of $x(b)$ and $h(b)$ as follows.
\begin{align*}
\text{ If }\upalpha=\uplambda_i-\uplambda_j,&\text{ then }x_\upalpha(t)=h(1+tE_{ij});\\
\text{ if }\upalpha=\uplambda_i+\uplambda_j,&\text{ then }x_\upalpha(t)=x\big(t(E_{ij}+E_{ji})\big);\\
\text{ if }\upalpha=2\uplambda_i,&\text{ then }x_\upalpha(t)=x(tE_{ii}).
\end{align*}
Recall that, for an affine root $\upgamma=\upalpha+m$, the affine root group $\mf{X}_\upgamma$ is
$$\mf{X}_{\upgamma}=\{x_{\upgamma}(t):t\in\Z_2\}=\{x_\upalpha(t):t\in2^m\Z_2\}.$$

Define the elements $w_0,w_1,\dots,w_n$ of $G$ as in Section \ref{chevalleygroup}.  These elements are representatives in $G$ of the simple generators of the affine Weyl group $W^\text{aff}$.  Abusing notation again, the elements of $W^\text{aff}$ and their representatives in $G$ will often be referred to using the same symbols.

Consider now the element $w_{2\uplambda_i}(1)$, for $i=1,\dots,n$.  This element acts, up to scalar, as the partial Fourier transform on the $i$th component of a function $\mathbf{f}$, so $w$ may be taken to be the product $\prod w_{2\uplambda_i(1)}$, which is a representative of the longest element of the Weyl group.  Therefore, if $\upalpha$ is any root, then
$$w^{-1}x_\upalpha(t)w=x_{-\upalpha}(-t).$$

\subsection{Minimal Type}

Let $\uprho:\Sp_{2n}(\Z_2)\to\Sp_{2n}(\F_2)$ be projection modulo 2, and define $B=B_{2n}(\F_2)$ to be the Borel subgroup of $\Sp_{2n}(\F_2)$ whose unipotent radical is generated by the positive root groups.  The Iwahori subgroup of $\Sp_{2n}(\Z_2)\subset\underline{G}$ is 
$$\underline{I}=\uprho^{-1}(B).$$
Let $I$ be the full inverse image in $G$ of $\underline{I}$,  generated by
$$\big\{h_\upalpha(t): \upalpha\in\Phi,t\in\Z_2^\times\big\}\cup\left\{x_\upalpha(t):
\begin{array}{l}
t\in\Z_2\text{ for }\upalpha>0\\
t\in2\Z_2\text{ for }\upalpha<0
\end{array}\right\}.$$
In terms of affine root groups, the second set of generators is
$$\left\{\mf{X}_{\upalpha+m}:
\begin{array}{l}
m\geq0\text{ for }\upalpha>0\\
m\geq1\text{ for }\upalpha<0
\end{array}\right\}.$$

As in the $\ce\SL_2$ case, a subgroup $J\subset I$ is needed in order to construct a minimal type.  Over $\F_2$, the finite split orthogonal group $\op{O}_{2n}(\F_2)$ is defined as the set of linear operators under which a symmetric quadratic form $q$ is invariant.  Take $q$ to be the quadratic form
$$q(x_1,\dots,x_n,y_1,\dots,y_n)=\sum_{i=1}^nx_iy_i$$
in the basis $\{e_1,\dots,e_n,f_1,\dots,f_n\}$.  Associated with this $q$ is the bilinear form 
$$(u,v)=q(u+v)+q(u)+q(v),$$ 
which is clearly symmetric.  In characteristic 2, a symmetric bilinear form is also skew-symmetric, so $\op{O}_{2n}(\F_2)$ may be realized as a subgroup of $\Sp_{2n}(\F_2)$.  Let $B'=B'_{2n}(\F_2)$ be the Borel subgroup of the finite orthogonal group corresponding to the positive roots, so that $B'$ is realized as a subgroup of $B$; the unipotent radical of $B'$ is generated by the positive short root groups, \cf \cite{carter}.  Define
$$\underline{J}=\uprho^{-1}(B'),$$
which is a subgroup of $\underline{I}$.
\begin{center}
\begin{tikzpicture}[scale=.4]
	\node at (0,4) {$\underline{J}$};
	\node at (4,4) {$\underline{I}$};
	\node at (9.5,4) {$\Sp_{2n}(\Z_2)$};
	\node at (0,0) {$B'$};
	\node at (4,0) {$B$};
	\node at (9.5,0) {$\Sp_{2n}(\F_2)$};
	\draw[->] (1,4) --(3,4);
	\draw[->] (5,4) --(7,4);
	\draw[->] (1,0) --(3,0);
	\draw[->] (5,0) --(7,0);
	\draw[->] (0,3) --(0,1);
	\draw[->] (4,3) --(4,1);
	\draw[->] (9.5,3) --(9.5,1);
	\node at (-.5,2) {$\uprho$};
	\node at (3.5,2) {$\uprho$};
	\node at (9,2) {$\uprho$};
\end{tikzpicture}
\end{center}
Let $J$ be the full inverse image in $G$ of $\underline{J}$, generated by
$$\big\{h_\upalpha(t): t\in\Z_2^\times\big\}\cup\left\{x_\upalpha(t):
\begin{array}{l}
t\in\Z_2\text{ for }\upalpha>0,\text{ short}\\
t\in2\Z_2\text{ for }\upalpha>0,\text{ long}\\
t\in2\Z_2\text{ for }\upalpha<0
\end{array}\right\}$$
In terms of affine root groups, the second set of generators is
$$\left\{\mf{X}_{\upalpha+m}:
\begin{array}{l}
m\geq0\text{ for }\upalpha>0,\text{ short}\\
m\geq1\text{ for }\upalpha>0,\text{ long}\\
m\geq1\text{ for }\upalpha<0
\end{array}\right\}.$$
The group $J$ is a normal subgroup of $I$, and the quotients $\rQ{I}{J}$ and $\lQ{J}{I}$ have coset representatives of the form
$$x=\prod_{\upalpha\in S}x_\upalpha(1),$$
where $S$ is any subset of the positive long roots.  The order in which the product is taken is of no consequence, as the root groups $\mf{X}_\upalpha$ and $\mf{X}_\upbeta$ commute with each other for any long roots $\upalpha\neq-\upbeta$.

Consider the even function 
$$\mathbf{f}=\upphi_0\otimes\cdots\otimes\upphi_0,$$
which is the characteristic function of the $\Z_2$-span of the basis $\{f_1,\dots,f_n\}$ of $\Y$; its Fourier transform is $\widehat{\mathbf{f}}=\mathbf{f}$.  The stabilizer $\op{Stab}_G(\C\mathbf{f})$ of the line $\C\mathbf{f}$ will be computed.  First, the Weyl group elements $w_\upalpha(1)$ will be discussed.  Since $w$ preserves $\C\mathbf{f}$ and conjugation by $w$ takes $w_\upalpha(1)$ to $w_{-\upalpha}(1)$, only the positive roots require consideration.
\begin{enumerate}[ \mbox{ } 1.]
\item If $\upalpha$ is the positive long root $2\uplambda_i$, then $w_\upalpha(1)$ essentially acts on $\mathbf{f}$ by partial Fourier transform on the $i$th component.  Hence, $w_\upalpha(1)$ preserves $\C\mathbf{f}$.
\item If $\upalpha$ is the positive short root $\uplambda_i-\uplambda_j$, then $w_\upalpha(1)=h(b)$ where $b=1+E_{ij}-E_{ji}-E_{ii}-E_{jj}$.  Hence, $w_\upalpha(1)$ essentially acts on $\mathbf{f}$ by switching the $i$th and $j$th coordinates, which clearly preserves the line $\C\mathbf{f}$.
\item If $\upalpha$ is the positive short root $\uplambda_i+\uplambda_j$, then $\upalpha$ may be obtained from $\uplambda_i-\uplambda_j$ via conjugation by $2\uplambda_j$.  Therefore, $w_\upalpha(1)$ also preserves the line $\C\mathbf{f}$.
\end{enumerate}

Next, the elements $h_\upalpha(t)$ will be considered.  If the stabilizer group contains $h_\upalpha(t)$, it also must contain $h_\upalpha(t^{-1})$.  Each $h_\upalpha(t)$ is $h(b)$ with $b=(b_1,\dots,b_n)$ a diagonal matrix.  The action of $h_\upalpha(t)$ on $\mathbf{f}(y)$ is essentially $\mathbf{f}(\tp{b}y)=\mathbf{f}(b_1y_1,\dots,b_ny_n)$.  Since $1$, $t$, and $t^{-1}$ are exactly the entries of $b$ for $h_\upalpha(t)$ and $h_\upalpha(t^{-1})$, 
$$h_\upalpha(t)\in\op{Stab}_G(\C\mathbf{f})\iff t\in\Z_2^\times.$$
Note that the same condition holds for the elements $w_\upalpha(t)$, since $w_\upalpha(t)=h_\upalpha(t)w_\upalpha(1)$.

Lastly, the elements $x_\upalpha(t)$ will be discussed; it suffices to consider the elements $x_\upalpha(2^m)=x_{\upalpha+m}(1)$.  Since $w^{-1}\mf{X}_\upalpha w=\mf{X}_{-\upalpha}$, the condition on $m$ will be the same for both $\upalpha$ and $-\upalpha$.  Therefore, only the positive affine root groups will be considered.
\begin{enumerate}[ \mbox{ } 1.]
\item If $\upalpha=\uplambda_i-\uplambda_j$ for $i<j$, then $x_{\upalpha+m}(1)=h(b)$ with $b=1+2^mE_{ij}$.  Since $\tp{b}$ acts on $y$ by replacing $y_j$ with $y_j+2^my_i$,
$$\mf{X}_{\upalpha+m}\subset\op{Stab}_G(\C\mathbf{f})\iff m\geq0.$$
\item If $\upalpha=\uplambda_i+\uplambda_j$, then $\upalpha$ is conjugate to $\upbeta=\uplambda_j-\uplambda_i$ by $2\uplambda_i$.  Therefore, $x_\upalpha(t)=w_{2\uplambda_i}(1)^{-1}x_\upbeta(\pm t)w_{2\uplambda_i}(1)$, so
$$\mf{X}_{\upalpha+m}\subset\op{Stab}_G(\C\mathbf{f})\iff m\geq0.$$
\item If $\upalpha=2\uplambda_i$, then $x_{\upalpha+m}(1)=x(b)$ with $b=2^mE_{ii}$.  Since $\tp{y}by=2^my_i^2$, $x_\upalpha(t)$ acts on $\mathbf{f}$ by $\uppsi(2^my_i^2)$, which is independent of $y$ exactly for $m\geq1$.  Hence,
$$\mf{X}_{\upalpha+m}\subset\op{Stab}_G(\C\mathbf{f})\iff m\geq1.$$
\end{enumerate}
In conclusion, 
$$J=I\cap\op{Stab}_G(\C\mathbf{f}),$$
so the Weil representation restricted to $J$ acts on the line $\C\mathbf{f}$.  Define $\bar\upchi$ to be the character of $J$ given by $\upomega(j)\mathbf{f}=\bar\upchi(j)\mathbf{f}$.  This character acts on the nontrivial subspace
$$V^{J,\bar\upchi}=\{\upphi\in V:\upomega(j)\upphi=\bar\upchi(j)\upphi\text{ for }j\in J\}\supset\C\mathbf{f}.$$

\begin{prop}\label{Sp_2n-chibar}
The character $\bar\upchi$ satisfies the following properties.
\begin{enumerate}[ \mbox{ } 1.]
\item $\bar\upchi\big(x_\upalpha(t)\big)=1$, for $t\in2\Z_2$ and $\upalpha$ postive, long.
\item $\bar\upchi\big(x_\upalpha(-1)x_{-\upalpha}(t)x_\upalpha(1)\big)=\uppsi(-t/4),$ for $t\in2\Z_2$ and $\upalpha$ positive, long.
\item $V^{J,\bar\upchi}=\C\mathbf{f}$.
\end{enumerate}
\end{prop}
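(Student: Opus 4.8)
The plan is to handle the three parts in increasing order of difficulty, reducing the first two to the rank-one computations of Proposition \ref{chibar} via the tensor structure $V=S(\Y)=S(\Q_2)\otimes\cdots\otimes S(\Q_2)$ of Section \ref{fouriertransform}, and reserving a genuinely $n$-dimensional Fourier-support argument for the third. For part 1, the positive long roots are exactly the $2\uplambda_i$, and $x_{2\uplambda_i}(t)=x(tE_{ii})$ acts in Schr\"odinger's model by $\upphi(y)\mapsto\uppsi(\tp{y}(tE_{ii})y)\upphi(y)=\uppsi(ty_i^2)\upphi(y)$, since the constant $\upalpha_b$ equals $1$. For $t\in2\Z_2$ and $y$ in the standard lattice $L_\Y=\Z_2 f_1\oplus\cdots\oplus\Z_2 f_n$ one has $ty_i^2\in2\Z_2$, on which $\uppsi$ (of conductor $1$) is trivial; hence $x_{2\uplambda_i}(t)\mathbf{f}=\mathbf{f}$ and $\bar\upchi(x_\upalpha(t))=1$, as is already clear from the determination of $J$.

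For part 2, I would note that the element $g=x_\upalpha(-1)x_{-\upalpha}(t)x_\upalpha(1)$ with $\upalpha=2\uplambda_i$ involves only the root groups $\mf{X}_{\pm2\uplambda_i}$. Under the tensor decomposition above, the operators $x_{\pm2\uplambda_i}(s)$ act as the identity on every factor except the $i$th, on which they act by the $\ce\SL_2$ operators $x(s)$ and $y(s)$ of Section \ref{SL_2}. For $x_{-2\uplambda_i}$ this uses $x_{-\upalpha}(s)=w^{-1}x_\upalpha(-s)w$ together with the fact that $w$ acts, up to scalar, as the total Fourier transform; conjugation by $w$ therefore affects only the $i$th tensor factor, where it turns multiplication by $\uppsi$ into the $\ce\SL_2$ operator $y(s)$. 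Consequently $g$ acts on $\mathbf{f}=\upphi_0\otimes\cdots\otimes\upphi_0$ as $x(-1)y(t)x(1)$ on the $i$th copy of $\upphi_0$, which by Proposition \ref{chibar} is multiplication by $\uppsi(-t/4)$; hence $\bar\upchi(g)=\uppsi(-t/4)$.

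The content is in part 3, where it suffices to prove $V^{J,\bar\upchi}\subset\C\mathbf{f}$. Fix $\upphi\in V^{J,\bar\upchi}$. By part 1, together with its analogue for negative long roots (valid since $x_{-2\uplambda_i}(t)$ acts on the $i$th factor as the $\ce\SL_2$ operator $y(t)$, so $\bar\upchi(x_{-2\uplambda_i}(t))=1$ for $t\in2\Z_2$ by Proposition \ref{chibar}), the elements $x_{2\uplambda_i}(2)$ and $x_{-2\uplambda_i}(2)$ act trivially on $\upphi$ for every $i$. Triviality of $x_{2\uplambda_i}(2)$, which multiplies by $\uppsi(2y_i^2)$, forces $y_i\in\Z_2$ throughout $\op{supp}(\upphi)$; ranging over all $i$ gives $\op{supp}(\upphi)\subset L_\Y$. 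Triviality of $x_{-2\uplambda_i}(2)=w^{-1}x_{2\uplambda_i}(-2)w$ becomes, after applying $w$, triviality of multiplication by $\uppsi(-2y_i^2)$ on $\widehat\upphi$, so likewise $\op{supp}(\widehat\upphi)\subset L_\Y$. By the Fourier-transform properties of Section \ref{fouriertransform} (with $c=\updelta=1$) applied in each coordinate, the containment $\op{supp}(\widehat\upphi)\subset L_\Y$ makes $\upphi$ constant on cosets of $L_\Y$; combined with $\op{supp}(\upphi)\subset L_\Y$, this forces $\upphi$ to be a scalar multiple of the characteristic function $\mathbf{f}$ of $L_\Y$, giving $V^{J,\bar\upchi}=\C\mathbf{f}$.

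I expect the main obstacle to be the bookkeeping underlying the rank-one reduction in part 2: one must verify cleanly that conjugation by $w$ localizes to the $i$th tensor factor and that the metaplectic constants attached to $x(s)$, $y(s)$, and $w$ cancel, so that the computation of Proposition \ref{chibar} transfers with no stray root-of-unity factor. Once this tensor factorization is established, parts 1 and 2 are purely rank-one, and part 3 is the direct support computation sketched above.
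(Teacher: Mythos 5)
Your proposal is correct, and for parts 1 and 2 it follows the paper's route exactly: the paper likewise observes that part 1 is immediate from the stabilizer computations, and that $x_\upalpha(-1)x_{-\upalpha}(t)x_\upalpha(1)$ for $\upalpha=2\uplambda_i$ acts on the $i$th tensor factor precisely as $x(-1)y(t)x(1)$ did in the $\ce\SL_2$ setting, so part 2 is inherited from Proposition \ref{chibar}; your extra care about localizing the conjugation (the factors $w_{2\uplambda_j}(1)$, $j\neq i$, commute past $x_{2\uplambda_i}$ and cancel) and about cancellation of the metaplectic scalars is exactly the right bookkeeping, and the scalars do cancel since they appear as $\upgamma^{-1}(\cdots)\upgamma$ in a conjugation. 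Where you genuinely diverge is part 3. The paper disposes of it in two lines by asserting that an element of $V^{J,\bar\upchi}$ ``is the tensor product of functions in $S(\Q_2)$'' and applying Proposition \ref{chibar} factor by factor; as stated this is informal, since a general Schwartz function on $\Y$ is only a finite sum of pure tensors, not a pure tensor. Your argument instead runs the paper's own $\ce\SL_2$ part-3 proof directly in $n$ variables: triviality of $x_{2\uplambda_i}(2)$ and $x_{-2\uplambda_i}(2)$ pins $\op{supp}(\upphi)$ and $\op{supp}(\widehat\upphi)$ inside the standard lattice (using that for $\uppsi$ of conductor $1$ the set where $\uppsi=1$ is exactly $2\Z_2$, so $\uppsi(2y_i^2)=1$ forces $y_i\in\Z_2$ pointwise), and then the Fourier-support facts of Section \ref{fouriertransform} with $c=\updelta=1$ force $\upphi$ to be constant on $L_\Y$-cosets and supported on $L_\Y$, hence a multiple of $\mathbf{f}$. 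This buys you a proof that makes no appeal to purity of tensors and in fact repairs the paper's informality at no extra cost; the paper's version buys brevity by leaning on the rank-one case through the tensor decomposition. Both are sound in substance, but your part 3 is the more complete argument.
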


\begin{proof}
The first part follows from the above computations.  By the same computations, it is clear that the action of $x_\upalpha(-1)x_{-\upalpha}(t)x_\upalpha(1)$ on the appropriate component of $S(\Y)$ is the same as the action of $x(-1)y(t)x(1)$ from the $\ce\SL_2$ setting.  Hence, the second part follows from Proposition \ref{chibar}.

For the third part, recall that a function $\upphi\in V^{J,\bar\upchi}$ is the tensor product of functions in $S(\Q_2)$.  By Proposition \ref{chibar}, each piece of  $\upphi$ must be a multiple of $\upphi_0$, hence the tensor product must be a multiple of $\mathbf{f}$.
\end{proof}

\subsection{Hecke Algebra}

Recalling again Section \ref{heckealgebras}, the Hecke algebra for this type  is $\HH=\HH(\biinv{G}{J};\upchi).$  A function $f\in\HH$ is determined by its value on double coset representatives of $\dQ{J}{G}{J}$.  By \cite{iwahori-matsumoto}, $\underline{G}=\underline{I}W^\text{aff}\underline{I}$, so a typical $J$-double coset is of the form $Jx_1wx_2J$, where $w$ is a representative an element of the affine Weyl group, and $x_1, x_2$ are products of the form $\prod_{\upalpha\in S}x_\upalpha(1)$ for a subset $S$ of the positive long roots.

Some of these representatives are redundant and may be eliminated as follows.  Let $x_1wx_2$ be a representive of a typical $J$-double coset.  Suppose that $x_\upalpha(1)$ occurs in $x_1$ and that $w^{-1}x_\upalpha(1)w=x_\upbeta(t)$ for some root $\upbeta$ and some $t\in2\Z_2$.  It may be assumed that $x_1=x'_1x_\upalpha(1)$.  If $x_{-\upbeta}(1)$ does not occur in $x_2$, then 
$$Jx'_1x_\upalpha(1)wx_2J=Jx'_1wx_2x_\upbeta(t)J=Jx'_1wx_2J.$$
If $x_{-\upbeta}(1)$ does occur in $x_2$, say $x_2=x'_2x_{-\upbeta}(1)$, then
\begin{align*}
Jx'_1x_\upalpha(1)wx_2J
&=Jx'_1wx'_2x_\upbeta(t)x_{-\upbeta}(1)J\\
&=Jx'_1wx'_2x_{-\upbeta}(1)\big(x_{-\upbeta}(-1)x_\upbeta(t)x_{-\upbeta}(1)\big)J\\
&=Jx'_1wx_2J,
\end{align*}
with the last equality following from the fact that $J$ is normal in $I$.  In either case, if $t\in2\Z_2$, then $x_\upalpha(1)$ may be moved across and absorbed into $J$.

Supposing now that $x_\upalpha(1)$ occurs as before with $t\notin2\Z_2$, suppose further that $x_{\upbeta}(1)$ occurs in $x_2$.  Since $wx_{\upbeta}(1)w^{-1}=x_\upalpha(1/t)$, a similar argument to that above permits the assumption that $1/t\notin2\Z_2$; that is, it may be assumed that $t\in\Z_2^\times$, or that $1+t\in2\Z_2$.  Writing $x_1=x'_1x_\upalpha(1)$ and $x_2=x'_2x_\upbeta(1)$, one has
$$Jx'_1x_\upalpha(1)wx'_2x_\upbeta(1)J=Jx'_1wx'_2x_\upbeta(1+t)J=Jx'_1wx'_2J.$$
Therefore, it may be assumed that an $x_\upalpha(1)$ occurring in $x_1$ precludes the appearance of the corresponding $x_\upbeta(1)$ in $x_2$.

In summary, if $x_\upalpha(1)$ occurs in $x_1$ with $w^{-1}x_\upalpha(1)w=x_\upbeta(t)$, it may be assumed that $t\notin2\Z_2$ and that $x_\upbeta(1)$ does not occur in $x_2$.  

\begin{lemma}\label{support1}
The support of $\HH$ is contained in $JW^{\op{aff}}J$.
\end{lemma}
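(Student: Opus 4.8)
The plan is to mimic the $\ce\SL_2$ computation of Lemma~\ref{SL_2-support-H1}, showing that an element $f\in\HH$ must vanish on any $J$-double coset whose representative $x_1wx_2$ has a nontrivial $x_1$ or $x_2$. By the reduction preceding the lemma, I may assume each representative has been put in the normalized form in which an $x_\upalpha(1)$ occurring in $x_1$ forces $w^{-1}x_\upalpha(1)w=x_\upbeta(t)$ with $t\notin2\Z_2$ and precludes $x_\upbeta(1)$ in $x_2$. So it remains to kill the cosets where $x_1x_2\neq 1$.

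The key mechanism is the same conjugation trick used in the $\ce\SL_2$ case. Suppose $x_\upalpha(1)$ appears in $x_1$, say $x_1=x_1'x_\upalpha(1)$, with $\upalpha$ a positive long root. Using the bi-invariance $f(j_1gj_2)=\upchi(j_1)f(g)\upchi(j_2)$ for $j_i\in J$, together with relations (R3) and (R4) from Section~\ref{relations}, I would insert an appropriate long-root element $x_\upalpha(2^{2m+1})$ (which lies in $J$ by Proposition~\ref{Sp_2n-chibar}(1), since $2^{2m+1}\in2\Z_2$) on the left of $x_1wx_2$ and slide it across $w$. Conjugation by $w$ converts it, via (R3)/(R4), into a long-root element on the far side whose argument combines with the $x_\upalpha(1)$ already present to produce the commutator element $x_\upalpha(-1)x_{-\upalpha}(2)x_\upalpha(1)$ acting on $\mathbf{f}$. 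By Proposition~\ref{Sp_2n-chibar}(2), $\upchi$ of this element equals $\uppsi(-1/2)$, a nontrivial $4$th root of $1$. Comparing the two expressions for $f(x_1wx_2)$ forces
$$f(x_1wx_2)=\uppsi(-1/2)\,f(x_1wx_2),$$
hence $f(x_1wx_2)=0$. The same argument applied to an $x_\upalpha(1)$ occurring in $x_2$ (sliding an element across from the right) handles that case symmetrically.

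The point that requires care—and which I expect to be the main obstacle—is verifying that the conjugate $w^{-1}x_\upalpha(2^{2m+1})w$ really is a \emph{long}-root group element with argument in $2\Z_2$, so that it lies in $J$ and so that Proposition~\ref{Sp_2n-chibar}(2) applies. Since $w$ is a product of the $w_{2\uplambda_i}(1)$ sending each root $\upalpha$ to $-\upalpha$, this holds for $\upalpha=2\uplambda_i$ by the explicit relation $w^{-1}x_\upalpha(t)w=x_{-\upalpha}(-t)$ recorded before the subsection; because the long roots $\pm2\uplambda_i$ are exactly the $SL_2$-type roots, the local computation in each such root $SL_2$ reduces verbatim to the $\ce\SL_2$ case already treated. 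I would check that the normalization from the preceding reduction guarantees the relevant argument lands in $\Z_2^\times+2\Z_2$, so that after absorbing into $J$ the residual factor is precisely $x_\upalpha(-1)x_{-\upalpha}(2)x_\upalpha(1)$ up to elements of $J$. Once this bookkeeping is in place, the nontriviality of $\uppsi(-1/2)$ closes the argument and shows $f$ is supported on the cosets $JwJ$ with $w\in W^{\op{aff}}$, i.e.\ $\op{supp}(\HH)\subset JW^{\op{aff}}J$.
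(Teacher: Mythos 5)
Your overall mechanism is the paper's: after the normalization preceding the lemma, insert a long-root element of $J$ with trivial $\upchi$-value, slide it across $w$, let it combine with the $x_\upalpha(1)$ to form a conjugated element of $J$ of the shape $x_\upalpha(\pm1)x_{-\upalpha}(2)x_\upalpha(\mp1)$, and conclude from Proposition \ref{Sp_2n-chibar}(2) that $f$ picks up the nontrivial fourth root of unity $\uppsi(-1/2)$ and hence vanishes. But there is a genuine gap in the step you yourself flag as the main obstacle. You justify crossing $w$ by asserting that ``$w$ is a product of the $w_{2\uplambda_i}(1)$ sending each root $\upalpha$ to $-\upalpha$,'' i.e.\ by the relation $w^{-1}x_\upalpha(t)w=x_{-\upalpha}(-t)$. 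That relation holds only for the \emph{particular} element $w=\prod w_{2\uplambda_i}(1)$ fixed at the start of Section 3; the $w$ in a double coset representative $x_1wx_2$ is an \emph{arbitrary} element of $W^{\op{aff}}$, and conjugation by it carries $\mf{X}_\upalpha$ to $\mf{X}_{\upbeta+m}$ for a generally different long root $\upbeta$ (for instance $w_{\uplambda_1-\uplambda_2}$ sends $2\uplambda_1$ to $2\uplambda_2$). So the computation does not reduce ``verbatim to the root $\SL_2$'': the inserted element and the $x_\upalpha(1)$ it must meet lie in different root $\SL_2$'s in general. Relatedly, your choice of insertion $x_\upalpha(2^{2m+1})$, in the \emph{same} root group as the $x_\upalpha(1)$ occurring in $x_1$, cannot produce the needed commutator: before crossing $w$ it simply commutes with $x_\upalpha(1)$ by (R1) (this is the absorption trick of the reduction, not the vanishing trick), and after crossing $w$ it sits on the opposite side of $w$ from $x_\upalpha(1)$.

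The paper's proof repairs exactly this point. With $w^{-1}x_\upalpha(1)w=x_\upbeta(t)$ as in the normalization (so $t\notin2\Z_2$, hence $2/t\in2\Z_2$), and taking $x_\upalpha(1)$ to be the \emph{first} factor of $x_1$, one inserts $x_{-\upbeta}(2/t)\in J$ on the far \emph{right}, commutes it across $x_2$ --- this is where the assumption that $x_\upbeta(1)$ does not occur in $x_2$ is actually used, since long-root groups $\mf{X}_\upgamma$ and $\mf{X}_\updelta$ commute unless $\upgamma=-\updelta$, a point your sketch states in the setup but never invokes --- pulls it across $w$ to obtain $x_{-\upalpha}(2)$, commutes it across the remaining factors of $x_1$, and writes $x_\upalpha(1)x_{-\upalpha}(2)=\big(x_\upalpha(1)x_{-\upalpha}(2)x_\upalpha(-1)\big)x_\upalpha(1)$, so that the commutator emerges at the outer left edge and is extracted by $J$-bi-invariance with value $\uppsi(-1/2)$; induction then strips all of $x_1$, and the mirror-image insertion strips $x_2$. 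Note also that your side bookkeeping is inverted (for $x_\upalpha(1)$ in $x_1$ one inserts on the right, not the left); by symmetry that alone would be a harmless slip. The substantive fix is to replace the same-root insertion $x_\upalpha(2^{2m+1})$ and the longest-element conjugation rule by the element $x_{-\upbeta}(2/t)$ dictated by the actual conjugation data of the given $w$; once that is done, your argument becomes the paper's.
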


\begin{proof}
Let $x_1wx_2$ be a typical double coset representative and let $x_\upalpha(1)$ be the first term of $x_1$.  For $w^{-1}x_\upalpha(1)w=x_\upbeta(t)$, it is assumed that $t\notin2\Z_2$ and that $x_\upbeta(1)$ does not occur in $x_2$.  Using Proposition \ref{Sp_2n-chibar} and the relations from Section \ref{relations}, for $f\in\HH$,
\begin{align*}
f(x_1wx_2)
&=f(x_1wx_2)\upchi\big(x_{-\upbeta}(2/t)\big)\\
&=f\big(x_1wx_2x_{-\upbeta}(2/t)\big)\\
&=f\big(x_1wx_{-\upbeta}(2/t)x_2\big)\\
&=f\big(x_1x_{-\upalpha}(2)wx_2\big)\\
&=f\big(x_\upalpha(1)x_{-\upalpha}(2)x_\upalpha(-1)x_1wx_2\big)\\
&=\upchi\big(x_\upalpha(1)x_{-\upalpha}(2)x_\upalpha(-1)\big)f(x_1wx_2)\\
&=\uppsi(-1/2)f(x_1wx_2).
\end{align*}
Since $\uppsi(-1/2)$ is a nontrivial 4th root of 1, it must be that $f(x_1wx_2)=0$.  By induction, if $x_1$ is nontrivial, then $f(x_1wx_2)=0$.  A symmetrical calculation shows that  if $x_2$ is nontrivial, then $f(wx_2)=0$.  Therefore, the support of $\HH$ is at most the set of $J$-double cosets of the affine Weyl group.
\end{proof}

To study the structure of $\HH$, the first step is to find nontrivial elements $T_0,\dots,T_n$ supported on the double cosets $Jw_0J,\dots,Jw_nJ$ and to compute their quadratic relations.

The $i=n$ case will be discussed first.  Define $J_n=J\cup Jw_nJ=J\cup Jw_n$.  Since $w_n$ normalizes $J$ and the action of $w_n$ on the last component of $\mathbf{f}$ mirrors the action of $w_1$ from the $\ce\SL_2$ setting, the situation is the same as in the previous section.  In particular, there exists an element $T_n$ of $\HH$, supported on $Jw_nJ$, which satisfies the quadratic relation $T_n^2=1$.

Next, consider $i=1,\dots,n-1$.  Define $J_i=J\cup Jw_iJ$.  For the finite group $\op{O}_{2n}(\F_2)\subset\Sp_{2n}(\F_2)$, let $P'_i(\F_2)$ be the parabolic subgroup of $\op{O}_{2n}(\F_2)$ whose unipotent radical is obtained by excluding the short simple root $\upalpha_i$.  Then, 
$$\underline{J}_i=\uprho^{-1}\big(P'_i(\F_2)\big),$$
where $\uprho$ is the projection $\Sp_{2n}(\Z_2)\to\Sp_{2n}(\F_2)$ and $\underline{J}_i$ is the linear projection of $J_i$.  

Define the Hecke subalgebra $\HH_i=\HH(\biinv{J_i}{J};\upchi)$ of functions supported on $J_i$.  Each $J_i$ intersects the affine Weyl group at $1$ and $w_i$, so $\HH_i$ is either one- or two-dimensional.  Define $V_i=J_i\cdot\C\mathbf{f}$ and $U_i=\ind_J^{J_i}\upchi$.  Since $w_i$ acts by switching the $i$ and $i+1$ coordinates, $V_i$ is the one-dimensional space $\C\mathbf{f}$.  The dimension of $U_i$ is
$$\dim(U_i)=[J_i:J]=[P'_i(\F_2):B']=3.$$
By Proposition \ref{T_x}, $\HH_i$ is the two-dimensional algebra generated by $\dot\upchi$ and $T_i$, where $T_i$ is supported on $Jw_iJ$ and is normalized to act on $V_i$ by
$$\uplambda=\frac{3-1}{1}=2.$$
Hence, $T_i$ satisfies the quadratic relation
$$(T_i-2)(T_i+1)=0.$$

Finally, the $i=0$ case will be described.  The definition of $J_0$ is somewhat of a delicate issue, so some facts about the standard apartment of $\Sp_{2n}$ will be recalled, \cf \cite{iwahori-matsumoto}, \cite{humphreys-coxeter}.

The affine Weyl group acts on the standard apartment in $\h_\R$ via affine reflections.  Take $\{a_1,\dots,a_n\}$ to be the basis of $\h_\R$ dual to $\{\uplambda_1,\dots,\uplambda_n\}$.  For $\upgamma\in W^\text{aff}$, let $P_\upgamma$ be the hyperplane fixed under the reflection associated to $\upgamma$ and let $H_\upgamma$ be the open half-plane on which $\upgamma$ acts positively.  The fundamental chamber of the apartment is
$$C=\bigcap_{i=0}^n H_{\upalpha_i},$$
and the Iwahori subgroup $\underline{I}$ of $\Sp_{2n}(\Z_2)$ is generated by the diagonal elements $\{h_\upalpha(t):\upalpha\in\Phi,t\in\Z_2^\times\}$ and the affine root groups
$$\{\mf{X}_\upgamma:\upgamma(C)>0\}.$$
The origin $0=(0,\dots,0)$ of $\h_\R$ corresponds to the maximal compact $\underline{K}=\Sp_{2n}(\Z_2)$ in the sense that $\underline{K}$ is generated by
$$\{\mf{X}_\upgamma:\upgamma(0)\geq0\}.$$
Consider the point $z=(1/2,0,\dots,0)\in\h_\R$ and let $\underline{K}'$ be the open compact subgroup generated by
$$\{\mf{X}_\upgamma:\upgamma(z)\geq0\}.$$
The group $\underline{K}'$ has a quotient $\SL_2(\F_2)\times \Sp_{2n-2}(\F_2)$.  If $\uprho'$ is the quotient map and $B'_{2n-2}(\F_2)$ is the Borel subgroup of $\op{O}_{2n-2}(\F_2)\subset\Sp_{2n-2}(\F_2)$, then $\underline{J}_0$ is defined as
$$\underline{J}_0=(\uprho')^{-1}\big(\SL_2(\F_2)\times B'_{2n-2}(\F_2)\big),$$
and $J_0$ is taken to be the full inverse image in $G$ of $\underline{J}_0$.  

Let $\HH_0$ be the Hecke subalgebra $\HH(J_0/\!\!/J;\upchi)$ of functions supported on $J_0$.  As $J_0$ intersects $W^\text{aff}$ at $1$ and $w_0$, $\HH_0$ is either one- or two-dimensional.

Define $V_0=J_0\cdot\C\mathbf{f}$.  Since $w_0=w_{2\uplambda_1}(1/2)$, and $x_{2\uplambda_1}(1/2)$ act on the first component of $V$ in the same way that $w(1/2)$ and $x(1/2)$ acted in the $\ce\SL_2$ case, the space $V_0$ is at least
$$V'_0=\big(\C\upphi_0\oplus\C\upphi_1\big)\otimes\upphi_0\otimes\cdots\otimes\upphi_0.$$
The claim is that this is a type for $J_0$, which gives $V_0=V'_0$.  What remains to be verified is that $\mf{X}_\upgamma$ preserves $V'_0$ for $\upgamma(z)\geq0$.  Only the short root groups involving $\uplambda_1$ need be considered.
\begin{enumerate}[ \mbox{ } 1.]
\item Let $\upalpha=\uplambda_1-\uplambda_j$, so $(\upalpha+m)(z)\geq0$ for $m\geq0$.  For such $\upalpha+m$, $x_{\upalpha+m}(1)=h(b)$ with $b=1+2^mE_{1j}$.  Recall that $\tp{b}$ acts on $y$ by replacing $y_j$ with $y_j+2^my_1$.  Since the $j$th component of a function in $V'_0$ is essentially $\upphi_0$, $V'_0$ is invariant under such a geometric action.
\item Let $\upalpha=-\uplambda_1+\uplambda_j$, so $(\upalpha+m)(z)\geq0$ for $m\geq1$.  In this setting, $x_{\upalpha+m}(1)=h(b)$ with $b=1+2^mE_{j1}$, and $\tp{b}$ acts on $y$ by replacing $y_1$ with $y_1+2^my_j\in y_1+2\Z_2$.  The first component of a function in $V'_0$ is a function on $\rQ{\Z_2}{2\Z_2}$, so $V'_0$ is invariant under the action $y\mapsto y+2\Z_2$.
\item Note that $\mf{X}_{\uplambda_1+\uplambda_j}$ and $\mf{X}_{-\uplambda_1-\uplambda_j}$ are respectively conjugate to $\mf{X}_{\uplambda_1-\uplambda_j}$ and $\mf{X}_{-\uplambda_1+\uplambda_j}$ via $w_{2\uplambda_j}(1)$ and $w_{2\uplambda_j}(1)^{-1}$.  Since $w_{2\uplambda_j}(1)$ preserves $V'_0$, so do the appropriate $\mf{X}_{\uplambda_1+\uplambda_j+m}$ and $\mf{X}_{-\uplambda_1-\uplambda_j-m}$.
\end{enumerate}
Hence, $V_0$ is a 2-dimensional type for $J_0$.

Define $U_0$ to be the induced representation $\ind_J^{J_0}\upchi$.  Under $\uprho'$, the Iwahori group $\underline{I}$ is the inverse image of $B_2(\F_2)\times B_{2n-2}(\F_2)$, and $\underline{J_0}$ is the inverse image of $\SL_2(\F_2)\times B'_{2n-2}(\F_2)$.  Since
$$[\SL_2(\F_2):B_2(\F_2)]=3\;\;,\;\;[B_{2n-2}:B'_{2n-2}]=2^{n-1},$$
and the index of $J$ in $I$ is $2^n$, the dimension of $U_0$ is
$$\dim(U_0)=[J_0:J]=\frac{\vol(\underline{J}_0)}{\vol(\underline{I})}\cdot\frac{\vol(\underline{I})}{\vol(\underline{J})}=(3\cdot2^{-n+1})\cdot (2^n)=6.$$
By Proposition \ref{T_x}, $\HH_0$ is two-dimensional and is generated by $\dot\upchi$ and $T_0$, where $T_0$ is supported on $Jw_0J$ and is normalized to act on $V_0$ by
$$\uplambda=\frac{6-2}{2}=2.$$
In other words, $T_0$ satisfies the quadratic relation
$$(T_0-2)(T_0+1)=0.$$

The next step is find a nontrivial element $T_w$ of $\HH$ supported on $JwJ$ for each $w\in W^\text{aff}$.  To do this, let $w=w_{i_1}\cdots w_{i_m}$ be a minimal expression in terms of $\{w_0,w_1,\dots,w_n\}$, and define
$$T_w=T_{i_1}\cdots T_{i_m}.$$
It remains to be verified that this definition does not depend on the choice of minimal expression, but first, the support of $T_w$ will be discussed.  The support of $T_w$ satisfies
$$\op{supp}(T_w)=Jw_{i_1}J\cdots Jw_{i_m}J\subset Iw_{i_1}I\cdots Iw_{i_m}I=IwI.$$
The last equality follows from the fact that the expression for $w$ is minimal.  Each $I$-double coset is a finite union of $J$-double cosets, but since the support of $\HH$ is contained in $JW^\text{aff}J$, the support of $T_w$ must be exactly $JwJ$.  

Each of $T_0,\dots,T_{n-1}$ acts on $\C\mathbf{f}$ by 2 and $T_n$ acts by 1.  Since the braid relations for $w_n$ are
$$w_nw_{n-1}w_nw_{n-1}=w_{n-1}w_nw_{n-1}w_n$$
and
$$w_nw_i=w_iw_n\text{ for }i<n-1,$$
the number $k(w)$ of $w_n$ occurring in a minimal expression is independent of the choice of expression.  Therefore, $T_w$ acts on $\C\mathbf{f}$ by $2^{m-k(w)}$.  Given that $T_w$ is determined by its support and its action on $\C\mathbf{f}$, each of these $T_w$ is well-defined.  This discussion also gives that the braid relations of $T_0,\dots,T_n$ are the same as the braid relations of $w_0,\dots,w_n$.  

The following theorem summarizes these results.

\begin{theorem}
The support of $\HH$ is $JW^\emph{aff}J$ and $\{T_w:w\in W^\emph{aff}\}$ forms a basis for $\HH$ as a vector space.  As an abstract algebra, $\HH$ is generated by $T_0,\dots,T_n$, subject to the quadratic relations
$$T_n^2=1\;\;\text{ and }\;\;
(T_i-2)(T_i+1)=0\;\;\text{ for }\;\;i<n,$$
and the braid relations given by the following Coxeter diagram.
\begin{center}
\begin{tikzpicture}[scale=.5]
	\draw (0,-.1) --(2,-.1);
	\draw (0,.1) --(2,.1);
	\draw (2,0) --(5,0);
	\draw (7,0) --(8,0);
	\draw (8,-.1) --(10,-.1);
	\draw (8,.1) --(10,.1);
	\draw[fill] (0,0) circle(5pt);
	\draw[fill] (2,0) circle(5pt);
	\draw[fill] (4,0) circle(5pt);
	\draw[fill] (8,0) circle(5pt);
	\draw[fill] (10,0) circle(5pt);
	\node at (0,-.8) {$T_0$};
	\node at (2,-.8) {$T_1$};
	\node at (4,-.8) {$T_2$};
	\node at (10,-.8) {$T_n$};
	\node at (6,0) {$\cdots$};
\end{tikzpicture}
\end{center}
In particular, $\HH$ is isomorphic to the Iwahori-Hecke algebra of $\op{SO}_{2n+1}(\Q_2)$.
\end{theorem}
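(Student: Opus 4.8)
The plan is to assemble the statement from the preceding discussion, promoting the inclusion of Lemma \ref{support1} to an equality and then reading off a basis and a presentation from the elements $T_w$.

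First I would establish the support and basis claims together. Lemma \ref{support1} gives $\op{supp}(\HH)\subseteq JW^{\text{aff}}J$, and the reverse inclusion is immediate since each $T_w$ is nonzero and supported exactly on $JwJ$. For the basis, the $T_w$ are linearly independent because the double cosets $JwJ$ are pairwise disjoint; for spanning, observe that the subspace of $\HH$ consisting of functions supported on a single coset $JwJ$ is at most one-dimensional, as any such $f$ is determined by the scalar $f(w)$ through the bi-equivariance $f(j_1wj_2)=\upchi(j_1)f(w)\upchi(j_2)$. Because $T_w$ supplies a nonzero element of this subspace, it equals $\C T_w$, and splitting $\HH$ according to support yields $\HH=\bigoplus_{w\in W^{\text{aff}}}\C T_w$. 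In particular $\{T_w\}$ is a basis indexed by $W^{\text{aff}}$.

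Next I would obtain the presentation. Since each $T_w$ is a product $T_{i_1}\cdots T_{i_m}$, the basis shows that $T_0,\dots,T_n$ generate $\HH$; the quadratic relations were checked above, and the type-$C_n$ braid relations hold because $T_w$ does not depend on the chosen minimal expression (two reduced words differing by a braid move produce the same $T_w$). To see the relations are complete, let $A$ be the abstract algebra on generators $T_0,\dots,T_n$ subject only to these quadratic and braid relations. The standard reduction argument for Hecke algebras of Coxeter systems shows that $A$ is spanned by symbols $\{\widetilde T_w:w\in W^{\text{aff}}\}$, and since the relations hold in $\HH$ there is a surjection $A\to\HH$ sending $\widetilde T_w\mapsto T_w$. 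As the $T_w$ are linearly independent, this surjection carries a spanning set to an independent set, hence is also injective, giving $A\cong\HH$.

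Finally I would match $\HH$ with the Iwahori--Hecke algebra $\HH'$ of split adjoint $\SO_{2n+1}(\Q_2)$. As recalled in the introduction, $\HH'$ is the extended affine Hecke algebra of type $B_n$ on generators $t_0,\dots,t_n,\uptau$; eliminating $t_0=\uptau t_1\uptau$ presents it on $t_1,\dots,t_n,\uptau$ with $(t_i-2)(t_i+1)=0$, $\uptau^2=1$, and the length-four relation $\uptau t_1\uptau t_1=t_1\uptau t_1\uptau$. The reversing assignment $t_n\mapsto T_0,\dots,t_1\mapsto T_{n-1},\uptau\mapsto T_n$ then carries the quadratic and braid relations of $\HH'$ exactly onto those of $\HH$, so by the presentation just established it extends to an isomorphism $\HH'\cong\HH$. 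The main obstacle is the completeness of the relations: everything hinges on knowing that $\{T_w\}$ is a basis of $\HH$, which combines the containment from Lemma \ref{support1} (where the genuine character $\bar\upchi$ of Proposition \ref{Sp_2n-chibar} enters) with the one-dimensionality of each single-coset component, against which the abstract algebra $A$ is measured.
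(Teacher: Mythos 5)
Your proposal is correct and takes essentially the same route as the paper: the support and basis claims come from Lemma \ref{support1} together with the nontrivial elements $T_w$ supported exactly on $JwJ$, the completeness of the relations is the standard surjection from the abstract algebra onto $\HH$ (which the paper leaves implicit), and the identification with the Iwahori--Hecke algebra of $\SO_{2n+1}(\Q_2)$ is the same elimination of $t_0=\uptau t_1\uptau$ followed by the reversing assignment $t_i\mapsto T_{n-i}$, $\uptau\mapsto T_n$. The only step you gloss---that two reduced words related by a braid move yield the same $T_w$---is settled in the paper by observing that the type-$C_n$ braid and commutation relations preserve the number $k(w)$ of occurrences of $w_n$, so both products act on $\C\mathbf{f}$ by the same nonzero scalar $2^{m-k(w)}$ and therefore coincide by exactly the one-dimensionality of the single-coset component that you established.
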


\begin{proof}
The only detail left is the isomorphism between the two algebras.  Let $\HH'$ be the Iwahori-Hecke algebra of $\op{SO}_{2n+1}(\Q_2)$.  It is generated by $t_0,\dots,t_n$ and $\uptau$, where $\uptau$ corresponds to the involution of the extended diagram of type $B_n$ that switches the $t_0$ and $t_1$ vertices.
\begin{center}
\begin{tikzpicture}[scale=.5]
	\draw (0,-1.3) --(2,0);
	\draw (0,1.3) --(2,0);
	\draw (2,0) --(3,0);
	\draw (5,0) --(6,0);
	\draw (6,-.1) --(8,-.1);
	\draw (6,.1) --(8,.1);
	\draw[<->] (0,.9) --(0,-.9);
	\draw[fill] (0,1.3) circle(5pt);
	\draw[fill] (0,-1.3) circle(5pt);
	\draw[fill] (2,0) circle(5pt);
	\draw[fill] (6,0) circle(5pt);
	\draw[fill] (8,0) circle(5pt);
	\node at (-.7,1.3) {$t_0$};
	\node at (-.7,-1.3) {$t_1$};
	\node at (2,-.8) {$t_2$};
	\node at (8,-.8) {$t_n$};
	\node at (4,0) {$\cdots$};
	\node at (.4,0) {$\uptau$};
\end{tikzpicture}
\end{center}
Since $\uptau t_1\uptau=t_0$, the generator $t_0$ is unnecessary to define $\HH'$ abstractly.  The braid relation between $\uptau$ and $t_1$ is
$$\uptau t_1\uptau t_1=t_0t_1=t_1t_0=t_1\uptau t_1\uptau,$$
so $\HH'$ has the same braid relations as $\HH$.  The quadratic relations are
$$\uptau^2=1\;\;\text{ and }\;\;(t_i-2)(t_i+1)=0,$$
and the isomorphism from $\HH'$ to $\HH$ is given by
$$\uptau\mapsto T_n,\hskip15pt\text{ and }\hskip15pt t_i\mapsto T_{n-i}\hskip10pt\text{ for }i=1,\dots,n.$$
\end{proof}

\begin{corollary}
If the Haar measure on $SO_{2n+1}(\Q_2)$ is normalized so that the Iwahori subgroup has volume 1 and the Haar measure on $\ce\Sp_{2n}(\Q_2)$ is normalized  so that $J$ has volume 1, then the Plancherel measures induced by $\HH$ and $\HH'$ coincide.
\end{corollary}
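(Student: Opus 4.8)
The plan is to deduce the corollary from the abstract theory of Hilbert algebras in \cite{bushnell-henniart-kutzko}, for which it suffices to upgrade the algebra isomorphism $\Theta:\HH'\to\HH$ of the preceding theorem (given by $t_i\mapsto T_{n-i}$ and $\uptau\mapsto T_n$) to an isomorphism of \emph{normalized Hilbert algebras}. The Plancherel measure attached to a normalized Hilbert algebra is intrinsic to its structure as an algebra equipped with the trace $\tr$ and the involution $f\mapsto f^\ast$, since it is read off from the decomposition of $\tr$ into irreducible traces (see 3.2 of \cite{bushnell-henniart-kutzko}). Thus, once $\Theta$ is shown to intertwine the two traces and the two $\ast$-operations — and hence the scalar products $[f,g]=\tr(f^\ast\cdot g)$ — the two Plancherel measures are literally the same measure transported through $\Theta$, which proves the corollary.

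First I would verify that $\Theta$ is a $\ast$-homomorphism. Because $\ast$ is a conjugate-linear anti-automorphism and the structure constants of each algebra in its standard basis $\{T_w\}$ (respectively $\{t_w\}$) are real — they are the usual integer polynomials in the parameters — it is enough to check that $\Theta$ carries self-adjoint generators to self-adjoint generators. The generators $t_1,\dots,t_n,\uptau$ of $\HH'$ are the standard self-adjoint basis elements of an Iwahori--Hecke algebra, so $t_i^\ast=t_i$ and $\uptau^\ast=\uptau$. On the other side, each $T_i$ was produced in Proposition \ref{T_x} as the operator acting by real eigenvalues on the \emph{orthogonal} eigenspaces $V_i$ and $V_i^\perp$ of the unitary module; since the Weil representation is unitary, $\ast$ is the Hilbert-space adjoint, and an operator acting as distinct real scalars on orthogonal complementary subspaces is self-adjoint, so $T_i^\ast=T_i$ for every $i$. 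Matching self-adjoint generators then forces $\Theta(f^\ast)=\Theta(f)^\ast$ throughout.

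Next I would check that $\Theta$ preserves the trace $\tr(f)=f(1)$. In each algebra $\tr(T_w)=0$ for $w\neq1$, since $T_w$ is supported off the identity double coset, while $\tr$ of the unit equals $1$; the chosen Haar normalizations (volume $1$ for the Iwahori subgroup of $\SO_{2n+1}(\Q_2)$ and for $J$) are exactly what make the two identity elements the units of trace $1$. Since $\Theta$ is an algebra isomorphism sending the identity of $\HH'$ to the identity $\dot\upchi$ of $\HH$, it realizes an abstract group isomorphism $\ce{W}(B_n)\cong W^{\mathrm{aff}}(C_n)$ between the index sets of the two bases that fixes the identity; the trace, which merely extracts the coefficient of the identity, is therefore preserved on the basis and hence everywhere by linearity.

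The main obstacle is the bookkeeping that makes the scalar products, not merely the traces, agree — equivalently, that the parameter function is transported correctly by the group isomorphism $\Theta$ induces. Here one must track the folding of the extended type-$B_n$ diagram against the type-$C_n$ diagram: the generators carrying parameter $2$, namely $t_1,\dots,t_n$ and $T_0,\dots,T_{n-1}$, correspond under $\Theta$, while the length-zero element $\uptau$ corresponds to the genuine reflection $T_n$ with parameter $1$. The delicate point is precisely this last matching, since $\uptau$ and $T_n$ have different origins; but $\uptau^\ast\uptau=\uptau^2=1$ and $T_n^\ast T_n=T_n^2=1$ both give $[\uptau,\uptau]=[T_n,T_n]=\tr(1)=1$, so the two length functions and parameter systems coincide under $\Theta$. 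With this in hand $[f,g]$ is preserved, $\Theta$ is an isomorphism of normalized Hilbert algebras, and the equality of the induced Plancherel measures follows from \cite{bushnell-henniart-kutzko}.
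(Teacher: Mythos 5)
Your proposal is correct and follows essentially the same route as the paper: both reduce the corollary, via the transfer theorem of \cite{bushnell-henniart-kutzko}, to checking that the isomorphism preserves $\tr$ (immediate since each basis element $T_w$, resp.\ $t_w$, off the identity double coset has $\tr=0$) and the $\ast$-operation. The only local difference is in verifying $T_i^\ast=T_i$: the paper notes $T_i^\ast$ is supported on $Jw_i^{-1}J=Jw_iJ$, writes $T_i^\ast=cT_i$, and computes $c=1$ from the action on the line $\C\upphi_0$, whereas you deduce self-adjointness directly from the identification of $\ast$ with the Hilbert-space adjoint on the unitary module $U_i$ and the real-scalar action of $T_i$ on the orthogonal eigenspaces from Proposition \ref{T_x} --- a harmless variant of the same idea.
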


\begin{proof}
By the transfer theorem in \cite{bushnell-henniart-kutzko}, it suffices to prove that the operations $\tr$ and $\ast$ are preserved under $\HH\cong\HH'$.  Since $\HH$ and $\HH'$ are supported on their respective Weyl groups,
$$\tr(T_w)=\begin{cases}1&\text{ if }w=1\\0&\text{ if }w\neq1\end{cases}\;\;\text{ and }\;\;\tr(t_w)=\begin{cases}1&\text{ if }w=1\\0&\text{ if }w\neq1,\end{cases}$$
so the trace operation is preserved by the isomorphism.

In the algebra $\HH'$ for the split orthogonal group, the element $t_w$ is the characteristic function of $IwI$, where $I$ is the Iwahori subgroup and $w$ is an element of the Weyl group.  Since $IwI=Iw^{-1}I$, the $\ast$-operation on $\HH'$ satisfies $t_i^\ast=t_i$, and hence, $t_w^\ast=t_{w^{-1}}$.  

It remains to show the same in $\HH$: that $T_i^\ast=T_i$, and hence $T_w^\ast=T_{w^{-1}}$.  Since $T_i^\ast$ is supported on $Jw_i^{-1}J=Jw_iJ$, it is a multiple of $T_i$, say $T_i^\ast=cT_i$.  Considering the representation $\C\upphi_0$ of $\HH$, where $T_i$ acts by $\uplambda_i$,
$$\uplambda_i||\upphi_0||^2=\langle T_i\upphi_0,\upphi_0\rangle=\langle \upphi_0,T_i^\ast\upphi_0\rangle=c\uplambda_i||\upphi_0||^2,$$
so this multiple is $c=1$.
\end{proof}

As a remark, this coincidence of Plancherel measures on the appropriate Bernstein components of the metaplectic and split orthogonal groups also holds in the case of residual characteristic $p\neq2$ considered by Gan and Savin in \cite{gan-savin}.  The proof that their isomorphism of Hecke algebras preserves the two operations $\ast$ and $\tr$ is the same as above.


\vskip20pt





\begin{thebibliography}{1}

\bibitem{bushnell-henniart-kutzko}
{\sc C.J. Bushnell, G. Henniart, P.C. Kutzko}, {\em Types and explicit Plancherel formulae for reductive $p$-adic groups}, On certain $L$-functions, Conference proceedings on the occasion of Freydoon Shahidi's 60th birthday, Clay Mathematical Proceedings, AMS Providence, RI.

\bibitem{carter}
{\sc R. Carter}, {\em Simple Groups of Lie {T}ype}, Wiley, 1989.


\bibitem{gan-savin}
{\sc W.T. Gan, G. Savin}, {\em Irreducible Representations of Metaplectic Groups II: Hecke Algebra Correspondences}, to appear in Representation Theory.


\bibitem{humphreys-coxeter}
{\sc J. Humphreys}, {\em Reflection Groups and Coxeter Groups}, Cambridge University Press, 1997.


\bibitem{iwahori-matsumoto}
{\sc N. Iwahori, H. Matsumoto}, {\em On some Bruhat decomposition and the structure of the Hecke rings of $p$-adic reductive groups}, Pub. Math. IHES, 25 (1965), pp. 5-48.

\bibitem{kudla}
{\sc S. Kudla}, {\em Notes on the local theta correspondence}, Notes from a Lecture Series at the European School on Group Theory, 1996.


\bibitem{loke-savin}
{\sc H.Y. Loke, G. Savin}, {\em Representations of the two-fold central extension of $\operatorname{SL}_2(\mathbb{Q}_2)$}, Pacific Journal of Mathematics, 247:2 (2010), pp. 435-455.



\bibitem{steinberg}
{\sc R. Steinberg}, {\em Lectures on Chevalley Groups}, Yale University, 1968.

\bibitem{washington}
{\sc L. Washington}, {\em On the self-duality of $\mathbb{Q}_p$}, The American Mathematical Monthly, 81:4 (1974), pp. 369-371.

\bibitem{weil}
{\sc A. Weil}, {\em Sur certains groupes d'op\'erateurs unitaires}, Acta Math. 111 (1964), pp. 143-211.

\end{thebibliography}
\end{document}